\newcommand*\patchAmsMathEnvironmentForLineno[1]{%
  \expandafter\let\csname old#1\expandafter\endcsname\csname #1\endcsname
  \expandafter\let\csname oldend#1\expandafter\endcsname\csname end#1\endcsname
  \renewenvironment{#1}%
     {\linenomath\csname old#1\endcsname}%
     {\csname oldend#1\endcsname\endlinenomath}}%
\newcommand*\patchBothAmsMathEnvironmentsForLineno[1]{%
  \patchAmsMathEnvironmentForLineno{#1}%
  \patchAmsMathEnvironmentForLineno{#1*}}%
\theoremstyle{thmstyleone}%
\newtheorem{theorem}{Theorem}[section]
\newtheorem{lemma}{Lemma}[section]
\theoremstyle{thmstyletwo}%
\newtheorem*{remark}{Remark.}
\theoremstyle{thmstylethree}%
\numberwithin{equation}{section}
\definecolor{RED}{rgb}{1,0,0}\definecolor{BLUE}{rgb}{0,0,1} 
\providecommand{\DIFadd}[1]{{\protect\color{blue}\uwave{#1}}} 
\providecommand{\DIFdel}[1]{{\protect\color{red}\sout{#1}}}                      
\providecommand{\DIFaddbegin}{} 
\providecommand{\DIFaddend}{} 
\providecommand{\DIFdelbegin}{} 
\providecommand{\DIFdelend}{} 
\providecommand{\DIFaddbeginFL}{} 
\providecommand{\DIFaddendFL}{} 
\providecommand{\DIFdelendFL}{} 
\lstdefinelanguage{DIFcode}{ 
  moredelim=[il][\color{red}\sout]{\%DIF\ <\ }, 
  moredelim=[il][\color{blue}\uwave]{\%DIF\ >\ } 
} 
\lstdefinestyle{DIFverbatimstyle}{ 
	language=DIFcode, 
	basicstyle=\ttfamily, 
	columns=fullflexible, 
	keepspaces=true 
} 
\begin{document}

\title[Stabilization of Ginzburg--Landau Spiral Waves]{Pattern-Selective Feedback Stabilization of Ginzburg--Landau Spiral Waves}


\author[1]{\fnm{Isabelle} \sur{Schneider}}\email{isabelle.schneider@fu-berlin.de}

\author[2]{\fnm{Babette} \sur{de Wolff}}\email{b.wolff@vu.nl}

\author*[3]{\fnm{Jia-Yuan} \sur{Dai}}\email{jydai@nchu.edu.tw}

\affil[1]{\orgdiv{Institut f\"{u}r Mathematik}, \orgname{Freie Universit\"{a}t Berlin}, \orgaddress{\street{Arnimallee 7}, \city{Berlin}, \postcode{14195}, \country{Germany}}}

\affil[2]{\orgdiv{Department of Mathematics}, \orgname{Vrije Universiteit Amsterdam}, \orgaddress{\street{De Boelelaan 1111}, \city{HV Amsterdam}, \postcode{1081}, \country{the Netherlands}}}

\affil*[3]{\orgdiv{Department of Applied Mathematics}, \orgname{National Chung Hsing University}, \orgaddress{\street{145 Xingda Rd.}, \city{Taichung City}, \postcode{402}, \country{Taiwan}}}

\DIFdelbegin 
\DIFdelend \DIFaddbegin \abstract{The complex Ginzburg--Landau equation serves as a paradigm of pattern formation and the existence and stability properties of Ginzburg--Landau $m$-armed spiral waves have been investigated extensively. However, {\color{red}many multi-armed} spiral waves are unstable and thereby rarely visible in experiments and numerical simulations. In this article we selectively stabilize certain significant classes of unstable spiral waves within circular and spherical geometries. As a result, stable spiral waves with an arbitrary number of arms are obtained for the first time. Our tool for stabilization is the symmetry-breaking control triple method, which is an equivariant generalization of the widely applied Pyragas control to the setting of PDEs.}
\DIFaddend 

\keywords{Ginzburg--Landau equation, $m$-armed spiral waves, symmetry breaking, feedback stabilization, delay control}



\maketitle

\section{Introduction} \label{sec;intro}

We consider the complex Ginzburg--Landau equation 
\begin{equation} \label{complex-GLe}
\partial_t \Psi = (1 + i \, \eta) \, \Delta_{\mathcal{M}} \Psi + \lambda  \left(1- \lvert\Psi\rvert^2 - i \, \beta \, \lvert\Psi\rvert^2\right) \Psi,
\end{equation}
where $\Delta_{\mathcal{M}}$ is the Laplace--Beltrami operator on a compact surface of revolution $\mathcal{M}$ to be defined shortly. Here $\eta \in \mathbb{R}$ is a prescribed complex diffusion parameter, $\lambda > 0$ is a bifurcation parameter, and $\beta \in \mathbb{R}$ is a prescribed kinetic parameter. The unknown function $\Psi$ is complex valued.

Ginzburg--Landau spiral waves are special solutions of \eqref{complex-GLe}, or more precisely relative equilibria, whose shape is recognized by isophase curves emitted from some vortices; see \cite{GoSt03, Mu03}. They play a significant role in studying nonlinear fields in condensed matter physics and hydrodynamic limits. In different contexts vortices are also called phase singularities, topological defects, and wave dislocations; see \cite{Pi99}. Surveys and numerical evidence on Ginzburg--Landau spiral waves are documented in \cite{ArKr02, FiSc03, SaSc20}. 

We aim to understand pattern formation, dynamical behavior, and feedback controls of Ginzburg--Landau spiral waves on the surface $\mathcal{M}$. To this end we present a trilogy of research: existence, stability analysis, and feedback stabilization. The first two episodes regarding existence and stability analysis have been investigated extensively in \DIFdelbegin \DIFdel{\mbox{
\cite{DaRong20, Mi05, Se05} }\hspace{0pt}
}\DIFdelend \DIFaddbegin \DIFadd{\mbox{
\cite{DaRong20, Se05} }\hspace{0pt}
}\DIFaddend and also by Dai in \cite{Da20, DaLa21}. This article serves as the third episode in which we stabilize certain classes of unstable spiral waves by introducing noninvasive symmetry-breaking feedback controls with spatio-temporal delays. For this purpose we adopt the \emph{control triple method} introduced by Schneider in \cite{Sch16, Sch18}. 

Existence of spiral waves can be triggered by symmetry-breaking bifurcations (see \cite{ChLa00, Va82}); a fact we will exploit for construction of the control terms.
It has been proved in \cite{Ha82, KoHo81} that spiral waves of \eqref{complex-GLe} exist on the plane $\mathbb{R}^2$. Since in experiments and numerical simulations the underlying domain is bounded, in \cite{Da20, DaLa21} Dai carried out a global bifurcation analysis and proved the existence of spiral waves in circular and spherical geometries. 

For stability analysis, the shooting method used in \cite{DaLa21} allows us to estimate the unstable dimension of spiral waves for sufficiently small parameters $0 \le \lvert\eta\rvert, \, \lvert\beta\rvert \ll 1$ in \eqref{complex-GLe}. Since only stable spiral waves are observable in experiments or numerical simulations, we are interested in whether the unstable spiral waves obtained in the literature \DIFdelbegin \DIFdel{\mbox{
\cite{DaRong20, Da20, DaLa21, Mi05, Se05} }\hspace{0pt}
become locally asymptotically }\DIFdelend \DIFaddbegin \DIFadd{\mbox{
\cite{DaRong20, Da20, DaLa21, Se05, Ts10} }\hspace{0pt}
become locally \textcolor{red}{exponentially} }\DIFaddend stable by introducing suitable feedback controls. 

The control term used in this article is inspired by the Pyragas control scheme introduced in \cite{PYR92}, one of the most successful methods to control the local stability of equilibria or periodic orbits of the ODE system $\dot{z}(t) = f(z(t))$ with $z(t) \in \mathbb{R}^n$. The spirit of Pyragas control is to keep the \DIFdelbegin \DIFdel{target }\DIFdelend \DIFaddbegin \DIFadd{targeted }\DIFaddend solution unchanged, while its local stability property is steered as desired. Concretely, the control scheme reads
\begin{equation} 
\dot{z}(t)=f(z(t))+b \, \left(z(t)-z(t-\tau)\right) \quad \mbox{for   } z(t) \in \mathbb{R}^n,
\end{equation}
where the matrix $b \in \mathbb{R}^{n \times n}$ is called the \emph{feedback gain}. The \textit{control term} $b \, (z(t) - z(t-\tau))$ is often called \emph{noninvasive} since it vanishes on equilibria and on periodic solutions with period $\tau > 0$. Pyragas control is widely applied in experimental and numerical settings, because it renders the unstable \DIFdelbegin \DIFdel{target }\DIFdelend \DIFaddbegin \DIFadd{targeted }\DIFaddend solutions visible while its implementation is model-independent and requires no expensive calculations; see  \cite{LEK95,OMA12,SCH06, SCH93,YAM09}. Mathematical results on  Pyragas control, however, are delicate and rely on explicit properties of the model; see \cite{FIE20, HOE05, Sch16a, proefschrift, oddnumber}.  In the setting of PDEs, feedback controls of Pyragas type have been exploited for solutions which are periodic in space or time; see \cite{LU96, MON04,POS07}.

The \emph{control triple method} adapts the spirit of Pyragas control to the setting of equivariant PDEs with the aim to stabilize spatio-temporal patterns. \DIFdelbegin \DIFdel{Throughout this article, we concretely }\DIFdelend \DIFaddbegin \DIFadd{\textcolor{red}{To this end}, we }\DIFaddend consider the following control system for the Ginzburg--Landau equation \eqref{complex-GLe}:
\begin{equation} \label{complex-GLe_control}
\partial_t \Psi = (1 + i \, \eta) \, \Delta_{\mathcal{M}} \Psi + \lambda  \left(1-\lvert\Psi\rvert^2 - i \, \beta \, \lvert\Psi\rvert^2\right) \Psi + b \, \left(\Psi - \mathcal{C}_{(h, \tau, g)}[\Psi] \right),
\end{equation}
where $b \in \mathbb{R}$ is the feedback gain and $\mathcal{C}_{(h, \tau, g)}[\Psi]$ denotes the \textit{control operator} given by
\begin{equation} \label{control-operator}
\mathcal{C}_{(h, 
\tau, g)}[\Psi](t,x) := h \, \Psi(t-\tau, gx) \quad \mbox{for    } t \ge 0, \, x \in \mathcal{M}.
\end{equation}
The control operator transforms the \emph{output signal} $\Psi$ by a \emph{multiplicative factor} $h \in \mathbb{C}$, a \emph{time delay} $\tau \geq 0$, and a \emph{space shift} $g: \mathcal{M} \to \mathcal{M}$ induced by the equivariance of \eqref{complex-GLe}. The three ingredients
\[
\left(\mbox{multiplicative factor   } h \in \mathbb{C}, \mbox{   time delay   } \tau \geq 0, \mbox{   space shift   } g: \mathcal{M} \to \mathcal{M}\right).
\]
characterize the control operator \eqref{control-operator} and are also referred to as the \emph{control triple}. 
Given a \DIFdelbegin \DIFdel{target }\DIFdelend \DIFaddbegin \DIFadd{targeted }\DIFaddend solution $\Psi_\ast$ of \eqref{complex-GLe}, we choose the control triple in such a way that  $\Psi_\ast = \mathcal{C}_{(h, \tau, g)} [\Psi_\ast]$. Therefore, the control term $b \, (\Psi - C_{(h, \tau, g)}[\Psi])$ vanishes on the \DIFdelbegin \DIFdel{target }\DIFdelend \DIFaddbegin \DIFadd{targeted }\DIFaddend solution $\Psi_*$, and thereby the \DIFdelbegin \DIFdel{target }\DIFdelend \DIFaddbegin \DIFadd{targeted }\DIFaddend solution is also a solution of the control system \eqref{complex-GLe_control}.

The control term $b \, (\Psi - C_{(h, \tau, g)}[\Psi])$ is \emph{selective} in the sense that it only preserves \DIFdelbegin \DIFdel{target }\DIFdelend \DIFaddbegin \DIFadd{targeted }\DIFaddend solutions with the prescribed spatio-temporal symmetries. Therefore, it allows us to select and stabilize certain unstable spatio-temporal solutions  (e.g., spiral waves) over all competing patterns. Moreover, the control term $b \, (\Psi - C_{(h, \tau, g)}[\Psi])$ is \emph{symmetry-breaking} in the sense that it  uses a proper subset of the set of spatio-temporal symmetries of the \DIFdelbegin \DIFdel{target }\DIFdelend \DIFaddbegin \DIFadd{targeted }\DIFaddend solution. The terminology `symmetry-breaking control term' is inspired by -- and in line with -- the terminology `symmetry-breaking bifurcation'. 

This article is organized as follows: \DIFaddbegin \DIFadd{\textcolor{red}{In Section \ref{sec:simple-example}, we explain the core ideas and design of symmetry-breaking control of spiral waves with an illustrative example}. }\DIFaddend In Section \ref{sec;setting}, we review the \DIFaddbegin \DIFadd{general }\DIFaddend mathematical setting for studying Ginzburg--Landau spiral waves in circular and spherical geometries and then provide the relevant existence and (in-)stability results from the literature. In Section \ref{sec;results}, we derive the precise formulation of the control triple and state our main results, namely that we can stabilize selected spiral waves. Finally, Section \ref{sec:feedback-stabilization} is devoted to the proof of our main results.

\DIFaddbegin \section{
{\color{red}\DIFadd{From equivariance to control - an example
}}
} \label{sec:simple-example}

{\color{red}
\DIFadd{In order to convey the main ideas of this article, in this section we discuss the feedback control of spiral waves in a key example. In the light of accessibility, this section is set up with as little generality as possible; the rigorous mathematical setting and more general statement will be discussed in Section \ref{sec;setting} and Section \ref{sec;results}.}}

\DIFadd{\textcolor{red}{
Concretely, we consider the Ginzburg-Landau equation \eqref{complex-GLe} on the unit $2$-sphere $\mathcal{M} = S^2$ and with parameter values $(\eta, \beta) = (0, 0)$, so that \eqref{complex-GLe} becomes
\begin{equation} \label{complex-GLe-simple}
\partial_t \Psi =  \Delta_{S^2} \Psi + \lambda  \left(1- \lvert\Psi\rvert^2 \right) \Psi.
\end{equation}
We parametrize $S^2$ by spherical coordinates
\begin{equation} \label{spherical-coordinates-simple}
S^2 = \left\{ ( \sin(s) \cos(\varphi), \sin(s) \sin(\varphi), \cos(s)) : s \in  [0, \pi], \, \varphi \in S^1 \cong \mathbb{R}/2 \pi \mathbb{Z} \right\}.
\end{equation}
The PDE \eqref{complex-GLe-simple} possesses a \emph{global gauge symmetry} in the sense that 
\begin{equation} \label{gauge-symmetry}
\Psi(t, s, \varphi) \mbox{   is a solution of (\ref{complex-GLe})} \mbox{   if and only if   } e^{i \omega} \Psi(t, s, \varphi) \mbox{   is a solution }
\end{equation}
for each $\omega \in S^1$. 
Moreover, \eqref{complex-GLe-simple} has a rotational symmetry on the $\varphi$-variable, i.e., 
\begin{equation} \label{rotation-symmetry-simple}
\Psi(t,s,\varphi) \mbox{   is a solution of (\ref{complex-GLe})} \mbox{   if and only if   } \Psi(t,s,\varphi-\zeta)  \mbox{   is a solution},
\end{equation}
for each $\zeta \in S^1$; and \eqref{complex-GLe-simple} has a reflection symmetry, i.e. 
\begin{equation} \label{reflection-symmetry-simple}
\Psi(t,s,\varphi) \mbox{   is a solution of (\ref{complex-GLe})} \mbox{   if and only if   } \Psi(t,\pi-s,\varphi) \mbox{   is a solution}.
\end{equation}
The equivariance relations \eqref{gauge-symmetry}--\eqref{reflection-symmetry-simple} motive us to seek \emph{$m$-armed spiral wave solutions} satisfying the Ansatz 
\begin{equation} \label{spiral-ansatz}
\Psi(t, s, \varphi) := e^{-i \Omega t} \, u(s) \, e^{i m \varphi},
\end{equation}
where $m \in \mathbb{N}$ is the number of arms, $\Omega \in \mathbb{R}$ is the rotation frequency, and the radial part $u(s) \in \mathbb{C}$ is either \textit{even-symmetric}, i.e., $u(\pi -s) = u(s)$, or \textit{odd-symmetric}, i.e., $u(\pi-s) = -u(s)$. 
}
}

\DIFadd{\textcolor{red}{
For each fixed number of arms $m \in \mathbb{N}$, $m$-armed spiral waves of the form \eqref{spiral-ansatz} exist as was proven by Dai in \cite{Da20}; they bifurcation from the trivial solution $\Psi \equiv 0$ at an infinite sequence of bifurcation values
\begin{equation} \label{eigenvalues-order}
0 < \lambda^m_0 < \lambda^m_1 < ... < \lambda^m_k < ..., \quad \lim_{k \rightarrow \infty} \lambda^m_k = \infty;
\end{equation}
see Figure \ref{fig;diagram} in Section \ref{sec;setting}. Moreover, every $m$-armed spiral wave that bifurcates at the bifurcation value $\lambda^m_k$ for $k \in \mathbb{N}_0$ has the following \textit{$\mathbb{Z}_2$-radial-symmetry}:
\begin{equation} \label{radial-symmetry-simple}
u(\pi -s ) = (-1)^k u(s) \quad \mbox{for   } s \in [0, \pi].
\end{equation}
For the specific parameters $(\eta, \beta) = (0, 0)$, it holds that rotation frequency $\Omega = 0$ due to the gradient dynamics induced by a strict Lyapunov functional; see \eqref{lyapunov-functional}. Hence every spiral wave solution of \eqref{complex-GLe-simple} is in fact an equilibrium and we also call it a \emph{vortex equilibrium}.
}
}

\DIFadd{\textcolor{red}{It has been proven that all $m$-armed spiral waves on the sphere $S^2$ are not locally exponentially stable; see \cite[Theorem 1.2]{DaRong20} and \cite[Theorem 1.3]{DaLa21}. Hence they serve as ideal candidates to be stabilized. 
To this end, we select an $m$-armed spiral wave that bifurcates from the bifurcation value $\lambda^m_j$ for some $j \in \mathbb{N}_0$ and denote this spiral wave by $\Psi_j$. Since $\Psi_j$ is an equilibrium and additionally satisfies \eqref{spiral-ansatz} and \eqref{radial-symmetry-simple}, it holds that
\begin{equation}
\Psi_j(t, s, \varphi) = (-1)^j e^{i m \zeta} \Psi_j(t-\tau, \pi- s, \varphi-\zeta), 
\end{equation} 
for every $\tau \ge 0$ and $\zeta \in S^1$. Consequently, a control term of the form 
\begin{equation} \label{control-term-simple}
b \left( \Psi - (-1)^j e^{i m \zeta} \Psi(t-\tau, \pi-s, \varphi-\zeta) \right), 
\end{equation}
with $\Psi = \Psi(t, s, \varphi)$ and $b \in \mathbb{R}$, vanishes on the selected spiral wave $\Psi_j$. As a result, $\Psi_j$ is also an equilibrium of the control system
\begin{equation} \label{control-system-simple}
\begin{split}
\partial_t \Psi & =  \Delta_{S^2} \Psi + \lambda  \left(1-\lvert\Psi\rvert^2 \right) \Psi 
\\&
\quad + b \, \left(\Psi - (-1)^j \, e^{i m \zeta} \, \Psi(t -\tau, \pi-s, \varphi - \zeta) \right),
\end{split}
\end{equation}
Our task is now to find $\tau \ge 0$, $\zeta \in S^1$, and $b \in \mathbb{R}$ such that the selected spiral wave $\Psi_j$ becomes a locally exponentially stable solution of the control system \eqref{control-system-simple}. Here the choice of parameter $\zeta \in S^1$ determines in which way the control term is \emph{pattern-selective}, i.e. it determines which spiral waves (other than the selected wave $\Psi_j$) are preserved by \eqref{control-system-simple}. Note that the space shift $\varphi - \zeta$ also pins the spiral tips to both poles of sphere.}  
}

\DIFadd{\textcolor{red}{
In the proof of the stabilization results, the main idea is that the control term \eqref{control-term-simple} should \textit{not} vanish on the unstable and center eigenfunctions associated with the selected spiral wave. Our stability analysis in Section \ref{sec:feedback-stabilization}, which is based on the Fourier decomposition \eqref{fourier-decomposition}, shows that the eigenfunctions associated with the selected spiral wave are of the form $v(s) \, e^{i n \varphi}$ with $n \in \mathbb{Z}$. Since there are only finitely many unstable and center eigenfunctions, all but finitely many choices of $\zeta \in S^1$ ensure that the control term \eqref{control-term-simple} does not vanish on all unstable and center eigenfunctions. 
}
}

\DIFadd{\textcolor{red}{
We emphasize that the control term \eqref{control-system-simple} exploits all the known symmetries of spiral waves in the literature; see \cite{Da20, DaLa21}. In particular, the $\mathbb{Z}_2$-radial-symmetry \eqref{radial-symmetry-simple} of the radial part allows us to stabilize all $m$-armed spiral waves with $j = 0, 1$.
}
}

\DIFadd{\textcolor{red}{
\begin{theorem}[Selective stabilization of $m$-armed spiral waves on the sphere] \label{theorem-simple}  Fix $m \in \mathbb{N}$, $\lambda > \lambda_j^m$ with $j \in \{0,1\}$, and let 
\begin{equation} \label{spiral-wave-simple}
\Psi_j(t, s,  \varphi ) =   u_j(s) \, e^{im\varphi}
\end{equation}
be the $m$-armed spiral wave of the Ginzburg--Landau equation \eqref{complex-GLe-simple}. Then for all but finitely many choices of $\zeta \in S^1$, there exists a constant $\tilde{b}(\zeta) < 0$ such that each feedback gain $b \le \tilde{b}$ admits an upper bound $\tilde{\tau} = \tilde{\tau}(\zeta, b) > 0$ for which $\Psi_j$ becomes a locally exponentially stable solution of the control system \eqref{control-system-simple} for all time delays $\tau \in [0, \tilde{\tau})$.
\end{theorem}
}
}

\DIFadd{\textcolor{red}{
In the next section, we introduce the general setting in which we study stabilization of Ginzburg--Landau spiral waves. There we consider the Ginzburg--Landau equation on more general surfaces on revolution; such surfaces maintain the rotation symmetry and also include disks that are topologically different from spheres. Moreover, we include parameters $(\eta, \beta) \neq (0,0)$, for which most spiral waves are rotating. 
}
}

\DIFaddend 
\section{Setting, existence, and (in)stability}\label{sec;setting}

Throughout this article \DIFdelbegin \DIFdel{, }\DIFdelend we consider a compact surface of revolution $\mathcal{M}$, which we parametrize by polar coordinates
\begin{equation} \label{polar-coordinates}
\mathcal{M} := \left\{ ( a(s) \cos(\varphi), a(s) \sin(\varphi), \tilde{a}(s)) : s \in  [0, s_*], \, \varphi \in S^1 \cong \mathbb{R}/2 \pi \mathbb{Z} \right\}.    
\end{equation}
Two main examples of $\mathcal{M}$ are the unit disk (when $a(s) = s$ and $\tilde{a}(s) = 0$ for $s \in [0,1]$) and the unit 2-sphere (when $a(s) = \sin(s)$ and $\tilde{a}(s) = \cos(s)$ for $s \in [0,\pi]$). In general, we make the following assumptions on the surface $\mathcal{M}$ and its parametrization: 
\begin{enumerate}
\item The function $a$ satisfies 
\begin{equation} \label{property-a(s)}
a(0) = 0 \mbox{ and } a(s)>0 \mbox{ for } s \in (0, s_*).
\end{equation}
\item The smoothness class of $\mathcal{M}$ is $C^{2, \upsilon}$ with a fixed H\"older exponent $\upsilon \in (0, 1)$. Equivalently, $a$ and $\tilde{a}$ are $C^{2, \upsilon}$ functions. Moreover, $\tilde{a}'(0) = 0$ because the smoothness of $\mathcal{M}$ prevents formation of a cusp at $s = 0$. 
\item We let $s$ be the arc length parameter, i.e., $(a'(s))^2 + (\tilde{a}'(s))^2 = 1$ for $s \in [0,s_*]$;
\end{enumerate}
Topologically, we distinguish the surface $\mathcal{M}$ between two cases: We say that $\mathcal{M}$ has \emph{circular geometry} if its boundary $\partial \mathcal{M}$ is nonempty; otherwise we say that  $\mathcal{M}$ has \emph{spherical geometry}. In the latter case, we restrict ourselves to the situation where $\mathcal{M}$ has reflection symmetry, i.e., we additionally assume the following: 
\begin{enumerate}
\item[4.] If $\partial \mathcal{M}$ is empty, we assume 
\begin{equation} \label{reflection-symmetry}
a(s) = a(s_* -s) \quad \text{ for $s \in [0, s_*]$}.
\end{equation}
\end{enumerate}
Note that $\partial \mathcal{M}$ is empty if and only if $a(s_*) = 0$, due to \eqref{polar-coordinates}--\eqref{property-a(s)}.

For both circular and spherical geometries, we consider $\Delta_{\mathcal{M}}: \mathrm{Dom}(\Delta_\mathcal{M})  \rightarrow L^2(\mathcal{M}, \mathbb{C})$, where the domain $\mathrm{Dom}(\Delta_\mathcal{M})$ is chosen as $H^2(\mathcal{M}, \mathbb{C})$, and if $\partial \mathcal{M}$ is nonempty, it is also equipped with the following Robin boundary conditions:
\begin{equation}\label{robin-boundary}
\alpha_1  \Psi+\alpha_2  \nabla \Psi \cdot \textbf{n}= 0.
\end{equation}
Here $\textbf{n}$ is the unit outer normal vector field on $\partial \mathcal{M}$; the scalars $\alpha_1, \alpha_2\in\mathbb{R}$ are not both zero and $\alpha_1 \alpha_2 \geq 0$. 
\DIFdelbegin \DIFdel{We require $\alpha_1 \alpha_2 \ge 0$ so that the real and imaginary parts of solutions do not grow at $\partial \mathcal{M}$. The choice of Robin boundary conditions }
\DIFdel{is motivated by minimizing a free energy in the theory of superconductivity; see \mbox{
\cite{Duetal92}}\hspace{0pt}
. 
}

\DIFdel{The Ginzburg--Landau equation (\ref{complex-GLe}) possesses a }\textit{\DIFdel{global gauge symmetry}}
\DIFdel{: 
}\begin{displaymath} \DIFdel{
\Psi \mbox{   is a solution of (\ref{complex-GLe}) if and only if    } e^{i \omega} \Psi \mbox{   is also a solution for each    } \omega \in S^1.
}\end{displaymath}

\DIFdel{The gauge symmetry }
\DIFdel{and the $S^1$-symmetry in the $\varphi$-variable of $\mathcal{M}$ motivate us to seek }\textit{\DIFdel{$m$-armed spiral wave solutions}} 
\DIFdel{satisfying the Ansatz
}\begin{displaymath} \DIFdel{
\Psi(t, s, \varphi) := e^{-i\Omega t} \, u(s) \, e^{im \varphi},
}\end{displaymath}
\DIFdel{where $m \in \mathbb{N}$ is the number of arms. Here the rotation frequency $\Omega \in \mathbb{R}$ and the complex-valued profile $u(s)$ are unknowns to be determined. 
}\DIFdelend 
\DIFaddbegin \DIFadd{\textcolor{red}{The latter assumption is technical and is required for the global bifurcation analysis in \cite{Da20}.
Robin boundary conditions \eqref{robin-boundary} generalize Neumann (or so-called \textit{no-flux}) boundary conditions (i.e., $\alpha_1 = 0$) frequently adopted in applied settings and for numerical studies;  see \cite{ArBi98, JIA04, Se05, Ts10}. In addition, for $(\eta, \beta) = (0,0)$ in the Ginzburg--Landau equation \eqref{complex-GLe}, the more general Robin boundary conditions \eqref{robin-boundary} have been derived by minimizing a free energy in the theory of superconductivity; see \cite{Duetal92}. 
}
}\DIFaddend 

Following \cite{GoSt03}, we graphically exhibit an $m$-armed spiral wave \eqref{spiral-ansatz} by plotting the level set where the imaginary part of \eqref{spiral-ansatz} is zero. Hence as we express $u(s) = A(s) \, e^{i p(s)}$ in the polar form, we obtain the level set where the \emph{phase field} $- \Omega t + p(s) + m \varphi$ of $\Psi(t,s, \varphi)$ is equal to zero modulo $\pi$. The $2\pi$-periodicity of angle $\varphi$ then yields the relations 
\begin{equation} 
\varphi =  \varphi_\ell(t, s) =  \frac{\Omega t - p(s)  + \ell \pi}{m} \,\,\, (\mathrm{mod} \mbox{   } 2\pi) \quad \mbox{for    } \ell = 0,1,...,2m-1,
\end{equation}
and we plot the pattern on $\mathcal{M}$ via the coordinates \eqref{polar-coordinates}. In this way, the pattern associated with \eqref{spiral-ansatz} is exhibited as a twisted spiral, motivating the name \emph{spiral wave}. We interpret \emph{vortices} of a spiral wave as phase singularities, i.e., zeros of $\Psi$ at which the phase field of $\Psi$ undergoes a jump discontinuity. Indeed, the Fourier mode $e^{im\varphi}$ of $\Psi$ on the $\varphi$-variable implies that the vortices reside at $s = 0$, and also at $s = s_*$ if $\partial \mathcal{M}$ is empty; see Fig. \ref{FIGspiral}.

\DIFdelbegin 
\DIFdelendFL \DIFaddbeginFL \begin{figure}[ht]\DIFaddendFL %
\centering
\includegraphics[width=1\textwidth]{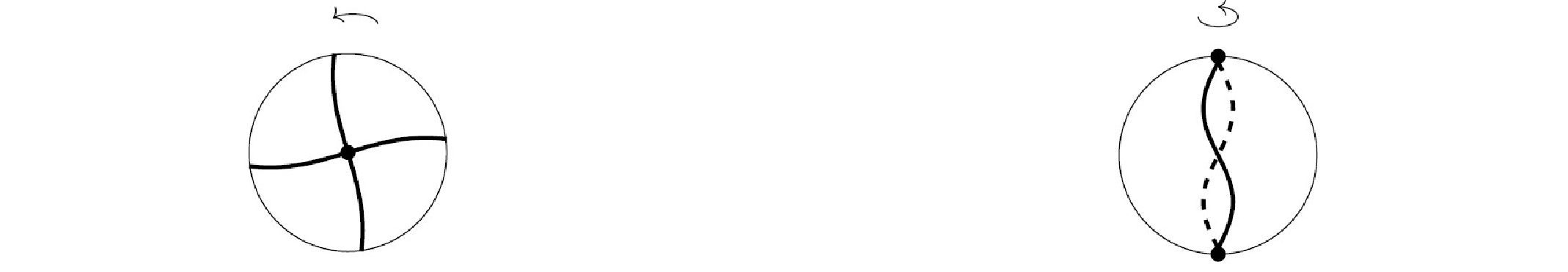}
\caption{On the left, a $2$-armed spiral pattern on the disk with the origin as the vortex. On the right, a $1$-armed spiral pattern on the sphere with the north and south poles as the vortices. Both spiral patterns may rotate with respect to the axis of rotation of the surface $\mathcal{M}$ with the rotation frequency $\Omega$. This figure, including its caption, has previously been published in \cite{DaLa21}.}\label{FIGspiral}
\end{figure}

\DIFdelbegin 



\DIFdelend Due to the gauge symmetry \eqref{gauge-symmetry} the $L^2$-subspace
\begin{equation}
L_m^2(\mathbb{C}) := \left\{ \psi \in L^2(\mathcal{M}, \mathbb{C}) : \psi(s, \varphi) = u(s) \, e^{im\varphi}, \, u(s) \in \mathbb{C} \right\}
\end{equation}
is invariant under the dynamics of the Ginzburg--Landau equation \eqref{complex-GLe}. Note that the restriction 
\begin{equation}
\Delta_{m} := \Delta_{\mathcal{M}} \big\lvert_{L_m^2(\mathbb{C})}: L_m^2(\mathbb{C})\rightarrow L_m^2(\mathbb{C})
\end{equation}
is well defined. Indeed, in polar coordinates (\ref{polar-coordinates}) we read $\Delta_m$ as follows:
\begin{equation} \label{restricted-operator}
\Delta_m \left(u(s) \, e^{im \varphi}\right) = \left( u''(s) + \frac{a'(s)}{a(s)} u'(s) - \frac{m^2}{a^2(s)} u(s) \right) e^{i m \varphi}.
\end{equation}
Substituting the Ansatz \eqref{spiral-ansatz} with $\psi(s, \varphi) := u(s) \, e^{im\varphi}$ into \eqref{complex-GLe} yields the following elliptic PDE on $L_m^2(\mathbb{C})$, for which we call the \textit{spiral wave equation}:
\begin{equation} \label{spiral-wave-equation}
0 = (1 + i \, \eta) \, \Delta_{m}\psi + i \, \Omega \, \psi + \lambda \left(1 - \lvert\psi\rvert^2 - i \, \beta \, \lvert\psi\rvert^2 \right)  \psi.
\end{equation}
\DIFaddbegin 

\DIFaddend Dai proved in \cite{Da20} that nontrivial solutions of \eqref{spiral-wave-equation}, in the sense that $\psi$ is not identically zero, form countably many supercritical pitchfork bifurcation curves as the parameter $\lambda$ crosses the simple eigenvalues $\lambda^m_k$ of $-\Delta_{m}$. \DIFdelbegin \DIFdel{The bifurcation values can be ordered as follows:
}\begin{displaymath} 
\DIFdel{0 < \lambda^m_0 < \lambda^m_1 < ... < \lambda^m_k < ..., \quad \lim_{k \rightarrow \infty} \lambda^m_k = \infty. 
}\end{displaymath}
\DIFdelend \DIFaddbegin \DIFadd{\textcolor{red}{Moreover, we order the set of all bifurcation values $\{\lambda_k^m : k \in \mathbb{N}_0\}$ as in \eqref{eigenvalues-order}.} }\DIFaddend We quote the following existence result of spiral waves by Dai from \DIFdelbegin \DIFdel{\mbox{
\cite[Lemma 2.4 and Lemma 3.5 (ii)]{Da20} }\hspace{0pt}
}\DIFdelend \DIFaddbegin \DIFadd{\mbox{
\cite[Lemma 2.4 and Lemma 3.5 (iii)]{Da20} }\hspace{0pt}
}\DIFaddend and \cite[Theorem 1.2]{DaLa21}. It is worthy emphasizing that the obtained spiral waves possess not only an arbitrary number of arms $m \in \mathbb{N}$, but also an arbitrary \textit{nodal class} of the radial part $j \in \mathbb{N}_0$.

\begin{lemma}[Existence] \label{lemma-existence}
For each fixed $m \in \mathbb{N}$, $k \in \mathbb{N}_0$, and $\lambda \in (\lambda^m_k, \lambda^m_{k+1}]$ there exists an $\varepsilon > 0$ such that the spiral wave equation \eqref{spiral-wave-equation} possesses $k+1$ distinct \emph{(}up to a gauge symmetry \eqref{gauge-symmetry}\emph{)} nontrivial solution-pairs parametrized by $\eta,\beta\in (-\varepsilon,\varepsilon)$ and $\eta \neq \beta$, denoted by
\begin{equation}
(\Omega(\eta, \beta), \psi_j(\cdot, \cdot \,\lvert\, \eta, \beta)) \in \mathbb{R} \times L_m^2(\mathbb{C}), \quad j = 0,1,...,k,
\end{equation}
and the following statements hold: 
\begin{enumerate}[label=(\roman*)]
\item[\emph{(i)}] \emph{(}$\mathbb{Z}_2$-radial-symmetry\emph{)} Suppose that in addition $\partial \mathcal{M}$ is empty and the reflection symmetry \eqref{reflection-symmetry} holds. Then 
\begin{equation}
\psi_j(s\DIFaddbegin \DIFadd{_*-s}\DIFaddend , \varphi \,\lvert\,\eta, \beta) = (-1)\DIFdelbegin \DIFdel{^{j}  }\DIFdelend \DIFaddbegin \DIFadd{^j }\DIFaddend \psi_j(s\DIFdelbegin \DIFdel{_*-s}\DIFdelend , \varphi \,\lvert\, \eta, \beta)
\end{equation}
for $s \in [0, s_*]$ and $\varphi \in S^1$.
\item[\emph{(ii)}] $\Omega(0,0) = 0$ and the radial part $u_{j}(s)$ of $\psi_j(s,\varphi) := \psi_j(s, \varphi \,\lvert\, 0,0)$ is real valued and possesses $j$ simple zeros on $(0, s_*)$.
\end{enumerate}
Moreover, we classify the types of patterns as shown in Fig. \ref{fig;diagram}.
\end{lemma}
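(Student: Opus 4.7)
The plan is to reduce the spiral wave equation \eqref{spiral-wave-equation} to a radial ODE on $[0,s_*]$ via the factorization $\psi(s,\varphi) = u(s)\,e^{im\varphi}$ and the Laplacian formula \eqref{restricted-operator}, and then to run a two-stage argument: first construct base solutions at the decoupled parameter point $\eta=\beta=0$ via bifurcation from simple eigenvalues, and second extend them to small $(\eta,\beta)$ with $\eta\neq\beta$ by the implicit function theorem.

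For the base stage I would set $\Omega=0$ and search for real-valued profiles, in which case \eqref{spiral-wave-equation} reduces to the real semilinear elliptic problem $-\Delta_m u = \lambda\,(1-u^2)\,u$, supplemented by $u(0)=0$ (forced by $m\geq 1$), the Robin condition \eqref{robin-boundary} at $s=s_*$ when $\partial\mathcal{M}\neq\emptyset$, and $u(s_*)=0$ in the spherical case. Since each eigenvalue $\lambda^m_k$ of $-\Delta_m$ is simple, Crandall--Rabinowitz yields a supercritical pitchfork emanating from $(\lambda^m_k,0)$. A Sturm oscillation argument applied to the radial ODE shows that solutions on the branch bifurcating at $\lambda^m_j$ carry exactly $j$ simple zeros in $(0,s_*)$, and nodal preservation along the global branch produces the profiles $u_j$ for $j=0,\dots,k$ whenever $\lambda\in(\lambda^m_k,\lambda^m_{k+1}]$. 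Disjointness of the nodal classes then accounts for exactly $k+1$ distinct solutions up to the gauge sign $u\mapsto -u$.

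For the continuation stage I would introduce the operator $F(u,\Omega;\eta,\beta)$ defined by the right-hand side of \eqref{spiral-wave-equation} and linearize around $(u_j,0;0,0)$. Writing $u = p + iq$ and splitting into real and imaginary parts, one sees that the real component decouples to the standard bifurcation linearization of the base problem (invertible on the complement of the trivial branch), while the imaginary component yields the self-adjoint operator $L_j\tilde q := \Delta_m \tilde q + \lambda(1-u_j^2)\tilde q$ whose kernel is exactly $\mathrm{span}\{u_j\}$; this is precisely the one-dimensional kernel created by the gauge symmetry \eqref{gauge-symmetry}. I would eliminate it by treating $\Omega$ as an additional unknown that plays the role of Lagrange multiplier, enforcing the Fredholm solvability condition $\langle u_j, \mathrm{RHS}\rangle_{L^2} = 0$. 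Computing this orthogonality relation yields a linear equation for $\Omega$ in terms of $(\eta,\beta)$, and the implicit function theorem then supplies a smooth family $(\Omega(\eta,\beta),\psi_j(\,\cdot\,|\,\eta,\beta))$ with $\Omega(0,0)=0$. The requirement $\eta\neq\beta$ emerges as the non-degeneracy condition in the bifurcation coefficient that guarantees a genuinely rotating spiral rather than a spurious standing-wave continuation.

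For part (i), the reflection $\sigma:s\mapsto s_*-s$ is an isometry of $\mathcal{M}$ under \eqref{reflection-symmetry}, so it commutes with $\Delta_m$ and preserves the structure of \eqref{spiral-wave-equation}. The base profile $u_j$ solves a Sturm--Liouville problem on a symmetric interval, and its nodal count $j$ forces the parity $u_j(s) = (-1)^j u_j(s_*-s)$; the uniqueness clause of the IFT then transports this discrete symmetry to the perturbed family, giving the claimed identity. I expect the main obstacles to be the sharp identification of the non-degeneracy condition singling out $\eta\neq\beta$, the verification that the $k+1$ branches constructed at the base point remain distinct after perturbation and exhaust all solutions up to gauge, and the careful interplay between the gauge symmetry and the free parameter $\Omega$ throughout the implicit function theorem argument.
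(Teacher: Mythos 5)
The paper does not actually prove Lemma \ref{lemma-existence}: it is quoted from \cite[Lemma 2.4 and Lemma 3.5(ii)]{Da20} and \cite[Theorem 1.2]{DaLa21}, so there is no in-paper proof to compare against. That said, your outline --- reduction to the radial problem on $L_m^2(\mathbb{C})$, bifurcation from the simple eigenvalues $\lambda_k^m$ of $-\Delta_m$ at the variational point, nodal-class bookkeeping along the branches, and continuation to small $(\eta,\beta)$ with $\Omega$ introduced to absorb the gauge kernel via a Fredholm solvability condition --- is essentially the strategy of those references, with the caveat that the simplicity and nodal statements there are obtained by a shooting/monotonicity argument for the singular Sturm--Liouville problem (singular because $a(0)=0$) rather than by off-the-shelf oscillation theory.

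Two points in your sketch are off or under-justified. First, you misidentify the role of $\eta\neq\beta$: it is not an implicit-function-theorem non-degeneracy condition (the continuation works for all small $(\eta,\beta)$, including $(0,0)$, which part (ii) uses), nor does it govern rotation. When $\eta=\beta$ the reaction term can be rewritten as $(1+i\eta)\lambda(1-\lvert\Psi\rvert^2)\Psi - i\eta\lambda\Psi$, so the solution is the real vortex profile up to a time-dependent gauge rotation and the pattern fails to be a genuine spiral; this is the bold line in Fig.~\ref{fig;diagram}, whereas $\Omega\neq 0$ corresponds to the separate dashed line. Second, your continuation step silently assumes that the real-part block of the linearization, $\Delta_m+\lambda(1-3u_j^2)$ on $L_m^2(\mathbb{C})$, is invertible away from the bifurcation point; that is precisely the content of the shooting-curve monotonicity in \cite{DaLa21} and cannot be waved through as ``the standard bifurcation linearization.'' Relatedly, in part (i) the local uniqueness is only modulo the gauge orbit, so reflection a priori yields $\psi_j(s_*-s,\varphi\,\lvert\,\eta,\beta)=e^{i\omega}\psi_j(s,\varphi\,\lvert\,\eta,\beta)$ for some $\omega\in S^1$; you still need continuity in $(\eta,\beta)$ from the base point to pin down $e^{i\omega}=(-1)^j$.
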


\begin{figure}[h]%
\centering
\includegraphics[width=1\textwidth]{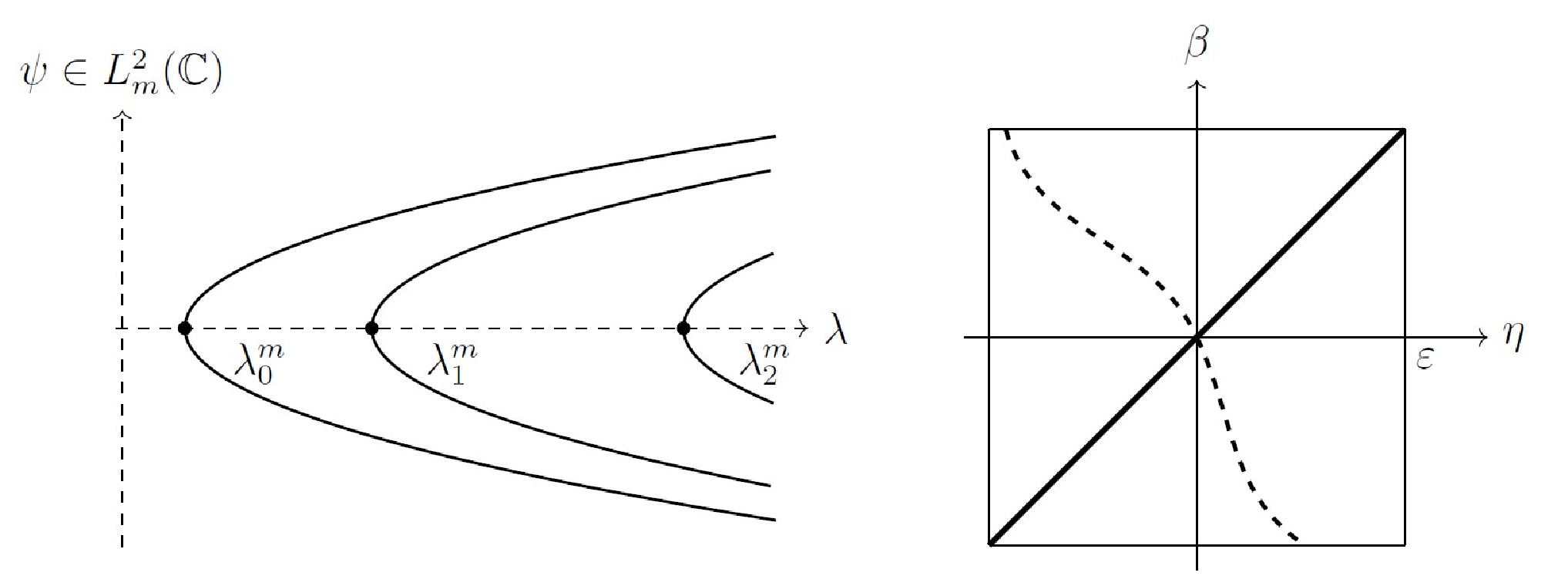}
\caption{On the left, the global bifurcation diagram of the spiral wave equation (\ref{spiral-wave-equation}). The shape of each bifurcation curve is a supercritical pitchfork. On the right: different types of pattern in the $(\eta,\beta)$-parameter space, according to \cite[Lemma 5.5]{Da20}. Each parameter not on the bold line yields a spiral pattern as shown in Fig. \ref{FIGspiral}. Such spiral patterns are rotating, i.e., $\Omega \neq 0$, if and only if parameters do not lie on the dashed line. The right panel of this figure, including its caption, has previously been published in \cite{DaLa21}.}\label{fig;diagram}
\end{figure}

\DIFdelbegin 

\DIFdelend We next collect stability information for the spiral waves obtained in Lemma \ref{lemma-existence}. We now fix $m \in \mathbb{N}$ and $\lambda > 0$. Every spiral wave is a nontrivial equilibrium of
\begin{equation} \label{evolution-pde}
\partial_t U = \mathcal{F}(\Omega, U \,\lvert\, \eta, \beta) := (1 + i \, \eta) \, \Delta_{\mathcal{M}} U + i \, \Omega \, U + \lambda \left(1 - \lvert U \rvert^2 - i \, \beta\, \lvert U \rvert^2 \right) U\DIFdelbegin \DIFdel{.
}\DIFdelend \DIFaddbegin \DIFadd{,
}\DIFaddend \end{equation}
\DIFdelbegin \DIFdel{The PDE (\ref{evolution-pde}) generates a local semiflow on the interpolation space $H^{2 \alpha}(\mathcal{M}, \mathbb{C})$ for any fixed exponent $\alpha > 1/2$, according to \mbox{
\cite[Theorem 3.3.3]{He81} }\hspace{0pt}
and the Schauder elliptic regularity theory}\DIFdelend \DIFaddbegin \DIFadd{\textcolor{red}{where $\mathcal{F} : \mathbb{R} \times \mathrm{Dom}(\Delta_\mathcal{M}) \times \mathbb{R} \times \mathbb{R} \rightarrow L^2(\mathcal{M}, \mathbb{C})$ is well defined due to the continuous embedding of $\mathrm{Dom}(\Delta_\mathcal{M})$ into $L^2(\mathcal{M}, \mathbb{C})$. Here we recall that $\mathrm{Dom}(\Delta_\mathcal{M})$ is $H^2(\mathcal{M}, \mathbb{C})$, and is also equipped with Robin boundary conditions \eqref{robin-boundary} if $\partial \mathcal{M}$ is nonempty}. \textcolor{red}{The PDE (\ref{evolution-pde}) generates a local semiflow on the interpolation space $H^{2 \alpha}(\mathcal{M}, \mathbb{C})$ for any fixed exponent $\alpha > 1/2$, according to \cite[Theorem 3.3.3]{He81}}}\DIFaddend . Moreover, the local stability of a spiral wave $(\Omega(\eta, \beta), \psi_j(\cdot, \cdot \,\lvert\, \eta, \beta))$ obtained in Lemma \ref{lemma-existence} is determined by the spectrum of the partial Fr\'{e}chet derivative 
\begin{equation} \label{linearization-general}
\mathcal{L}_j(\eta, \beta):= \partial_U \mathcal{F}(\Omega(\eta, \beta), \psi_j(\cdot, \cdot \,\lvert\, \eta, \beta) \,\lvert\, \eta, \beta) : L^2(\mathcal{M}, \mathbb{C}) \rightarrow L^2(\mathcal{M}, \mathbb{C})
\DIFdelbegin \DIFdel{;
}\DIFdelend \end{equation}
\DIFaddbegin \DIFadd{\textcolor{red}{with the domain $\mathrm{Dom}(\mathcal{L}_j(\eta, \beta)) := \mathrm{Dom}(\Delta_{\mathcal{M}})$}; }\DIFaddend see \cite[Chapter 5]{He81}. Notice that $\mathcal{L}_{j}(\eta, \beta)$ is an $\mathbb{R}$-linear operator, because we always identify $\mathbb{C}$ with $\mathbb{R}^2$ as a real vector space.

Since $\mathcal{L}_j(\eta, \beta)$ is a uniformly elliptic operator on a bounded surface $\mathcal{M}$, it has compact resolvent and thus its spectrum, denoted by $\sigma(\mathcal{L}_j(\eta, \beta))$, consists of eigenvalues with finite algebraic multiplicity; see \cite{Ka95}. \DIFaddbegin \DIFadd{\textcolor{red}{The gauge symmetry \eqref{gauge-symmetry} always triggers a \textit{trivial eigenvalue}, which is zero; its associated eigenfunctions span the tangent space along the group orbit of the spiral wave}. }\DIFaddend Since $\mathcal{L}_j(\eta, \beta)$ is a sectorial operator, \DIFdelbegin \DIFdel{the real part of $\sigma(\mathcal{L}_j(\eta, \beta))$ has an upper bound denoted by
}\begin{displaymath}
\DIFdel{\mu_j^*(\eta, \beta) := \max \left\{\mathrm{Re}(z) : z \in \sigma(\mathcal{L}_j(\eta, \beta)) \right\}.
}\end{displaymath}
\DIFdelend \DIFaddbegin \DIFadd{\textcolor{red}{the following quantity is well defined:}
\textcolor{red}{
\begin{equation} \label{principal-eigenvalue-general}
\begin{aligned}
\mu_j^*(\eta, \beta) :=
\begin{cases}
\max \left\{\mathrm{Re}(z) : z \in \sigma(\mathcal{L}_j(\eta, \beta)) \setminus \{0\} \right\} & \mbox{if $0$ is algebraically simple}; \\
\max \left\{\mathrm{Re}(z) : z \in \sigma(\mathcal{L}_j(\eta, \beta)) \right\} &\mbox{if otherwise.}
\end{cases}
\end{aligned}
\end{equation}
}
}

\DIFaddend It follows that the spiral wave $(\Omega(\eta, \beta), \psi_j(\cdot, \cdot \,\lvert\, \eta, \beta))$ is locally \DIFdelbegin \DIFdel{asymptotically }\DIFdelend \DIFaddbegin \DIFadd{\textcolor{red}{exponentially} }\DIFaddend stable (resp., unstable) if $\mu_j^*(\eta, \beta) < 0$ (resp., $\mu_j^*(\eta, \beta) >0$). 

\begin{lemma} \label{lemma-upper-semicontinuous}
The upper bound $\mu_j^*(\eta, \beta)$ depends upper-semicontinuously on the parameters $(\eta, \beta) \in \mathbb{R}^2$. Consequently, the vortex equilibrium $\psi_j(\cdot, \cdot \,\lvert\, 0,0)$ is locally \DIFdelbegin \DIFdel{asymptotically }\DIFdelend \DIFaddbegin \DIFadd{\textcolor{red}{exponentially} }\DIFaddend stable \emph{(}resp., unstable\emph{)} if and only if the spiral wave $(\Omega(\eta, \beta), \psi_j(\cdot, \cdot \,\lvert\, \eta, \beta))$ is locally \DIFdelbegin \DIFdel{asymptotically }\DIFdelend \DIFaddbegin \DIFadd{\textcolor{red}{exponentially} }\DIFaddend stable \emph{(}resp., unstable\emph{)} for sufficiently small parameters $0 \le \lvert\eta\rvert, \, \lvert\beta\rvert \ll 1$.
\end{lemma}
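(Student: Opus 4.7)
The plan is to reduce the claim to Kato's perturbation theory for sectorial operators with compact resolvent, after first establishing continuous dependence of the linearization on $(\eta,\beta)$.

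First, I would show that the map $(\eta,\beta)\mapsto(\Omega(\eta,\beta),\psi_j(\cdot,\cdot\,|\,\eta,\beta))$ is continuous with values in $\mathbb{R}\times H^{2\alpha}(\mathcal{M},\mathbb{C})$ for some $\alpha>1/2$. This is built into the construction of Lemma~\ref{lemma-existence}: the nontrivial pitchfork branches are produced by a Lyapunov--Schmidt / implicit function argument applied to the spiral wave equation \eqref{spiral-wave-equation}, which depends smoothly on $(\eta,\beta)$. By Sobolev embedding $H^{2\alpha}\hookrightarrow L^{\infty}$, the multiplication operator appearing in the Fr\'echet derivative depends continuously on $(\eta,\beta)$ as a bounded operator on $L^2(\mathcal{M},\mathbb{C})$.

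Second, I would split
\begin{equation*}
\mathcal{L}_j(\eta,\beta) \;=\; (1+i\eta)\,\Delta_{\mathcal{M}} \;+\; B_j(\eta,\beta),
\end{equation*}
where the unbounded part has the fixed domain $\mathrm{Dom}(\Delta_{\mathcal{M}})$ and $B_j(\eta,\beta)$ is a bounded operator on $L^2$ varying continuously with $(\eta,\beta)$ by Step~1. A short argument with the second resolvent identity, combined with the fact that $(1+i\eta)\Delta_{\mathcal{M}}$ is uniformly sectorial for $\eta$ in a compact set and $B_j$ is $\Delta_{\mathcal{M}}$-compact, shows that the family $\{\mathcal{L}_j(\eta,\beta)\}$ is \emph{uniformly} sectorial in a neighborhood of any fixed $(\eta_0,\beta_0)$, and that one has norm-resolvent convergence $\mathcal{L}_j(\eta,\beta)\to\mathcal{L}_j(\eta_0,\beta_0)$. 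Invoking Kato's theorem on the continuity of the spectrum under norm-resolvent convergence for operators with compact resolvent (\cite{Ka95}), the spectrum depends upper-semicontinuously in the Hausdorff sense on any bounded region of $\mathbb{C}$, while uniform sectoriality prevents any spectrum from drifting off to the right. Passing to real parts and taking the supremum yields
\begin{equation*}
\limsup_{(\eta,\beta)\to(\eta_0,\beta_0)} \mu_j^*(\eta,\beta) \;\le\; \mu_j^*(\eta_0,\beta_0),
\end{equation*}
which is the claimed upper-semicontinuity.

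For the consequence at $(\eta_0,\beta_0)=(0,0)$, if $\mu_j^*(0,0)<0$ then upper-semicontinuity directly gives $\mu_j^*(\eta,\beta)<0$ for sufficiently small parameters, hence local asymptotic stability. If instead $\mu_j^*(0,0)>0$, then since $\mathcal{L}_j(0,0)$ has compact resolvent the maximum is attained as an isolated eigenvalue of finite multiplicity; Kato's perturbation theory guarantees that under the norm-resolvent convergence of Step~2 such an eigenvalue persists and moves continuously, so $\mathcal{L}_j(\eta,\beta)$ retains an eigenvalue close to $\mu_j^*(0,0)$ and hence $\mu_j^*(\eta,\beta)>0$ for small $(\eta,\beta)$, giving instability. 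The two reverse implications follow by contrapositive. The main technical point, and the only step that is not purely formal, is Step~2: verifying uniform sectoriality in a neighborhood of $(\eta_0,\beta_0)$ so that the spectrum cannot escape to the right half-plane under perturbation; this hinges on $B_j(\eta,\beta)$ having small relative $\Delta_{\mathcal{M}}$-bound, which follows from $L^\infty$-boundedness of $\psi_j$ obtained in Step~1.
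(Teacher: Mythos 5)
Your proposal is correct in substance, but it takes a genuinely different technical route from the paper's. The paper never works at the level of the resolvent of the generator: it passes to the linear semiflow $\{\mathcal{S}_j(t\,\lvert\,\eta,\beta)\}_{t\ge 0}$ generated by $\mathcal{L}_j(\eta,\beta)$, shows that for each fixed $t>0$ the time-$t$ maps converge in operator norm as $(\eta,\beta)\to(0,0)$ (using that the spectrum of $(1+i\eta)\,\Delta_{\mathcal{M}}$ is that of $\Delta_{\mathcal{M}}$ scaled by $1+i\eta$, and that the reaction part of $\mathcal{L}_j(\eta,\beta)-\mathcal{L}_j(0,0)$ is a bounded $L^2$-perturbation, citing \cite[Chapter 9, Theorem 2.16]{Ka95}), and then invokes upper semicontinuity of the spectrum of \emph{bounded} operators under norm convergence \cite[Chapter 4, Remark 3.3]{Ka95}. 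The advantage of the semigroup route is that the spectrum of a bounded operator is compact, so the issue you correctly single out as the main technical point of your argument --- preventing spectrum from drifting to the right, which forces you to verify uniform sectoriality of the family near $(\eta_0,\beta_0)$ --- simply does not arise; the price is an implicit appeal to spectral mapping to translate between $\sigma(\mathcal{S}_j(t\,\lvert\,\eta,\beta))$ and $\mathrm{Re}\,\sigma(\mathcal{L}_j(\eta,\beta))$. Conversely, your version is more explicit than the paper's on two points the paper glosses over: the continuous dependence of the branch $(\Omega(\eta,\beta),\psi_j(\cdot,\cdot\,\lvert\,\eta,\beta))$ on the parameters, which is what makes the perturbation small in the first place, and the ``only if'' half of the consequence, which requires not merely upper semicontinuity but persistence of the isolated unstable eigenvalue of finite multiplicity; you supply this via Kato's continuity of isolated eigenvalues under norm-resolvent convergence, whereas the paper's written proof establishes only the upper-semicontinuity half and leaves that direction implicit. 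Both routes ultimately rest on the same two structural facts: the unbounded part changes only by the explicit scalar factor $1+i\eta$, and everything else is a bounded perturbation.
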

\begin{proof}[Proof.]
Let $\{\mathcal{S}_j(t \,\lvert\, \eta, \beta)\}_{t \ge 0}$ be the linear semiflow on $L^2(\mathcal{M}, \mathbb{C})$ generated by $\mathcal{L}_j(\eta, \beta)$. It suffices to show that the spectrum of $\mathcal{S}_j(t \,\lvert\, \eta, \beta)$ depends upper-semicontinuously on the parameters $(\eta, \beta)$ for each fixed $t > 0$. Since the spectrum of $(1 + i \, \eta) \, \Delta_{\mathcal{M}}$ is the same as the spectrum of $\Delta_\mathcal{M}$ multiplying by $1+ i\,\eta$ and the reaction term of $\mathcal{L}_j(\eta, \beta) - \mathcal{L}_j(0,0)$ is a bounded $L^2$-perturbation, $\mathcal{S}_j(t \,\lvert\, \eta, \beta)$ converges to $\mathcal{S}_j(t \,\lvert\, 0,0)$ in the operator norm for each fixed $t > 0$; see \cite[Chapter 9, Theorem 2.16]{Ka95}. Hence the spectrum of $\mathcal{S}_j(t \,\lvert\, \eta, \beta)$ depends upper-semicontinuously on the parameters $(\eta, \beta)$ for each fixed $t > 0$; see \cite[Chapter 4, Remark 3.3]{Ka95}.
\end{proof}

Since spiral waves in Lemma \ref{lemma-existence} are known to exist only for sufficiently small parameters $0 \le \lvert\eta \rvert,\, \lvert\beta\rvert \ll 1$, by the upper-semicontinuous dependence in Lemma \ref{lemma-upper-semicontinuous} we now focus on the \textit{variational case} $(\eta, \beta) = (0,0)$. Then the Ginzburg--Landau equation \eqref{complex-GLe} is associated with the following \textit{strict Lyapunov functional} (also see \cite{ArKr02}):
\begin{equation} \label{lyapunov-functional}
\mathcal{E}[\Psi] := \int_\mathcal{M}  \lvert\nabla \Psi \rvert^2 - \lambda \left( \lvert\Psi\rvert^2 - \frac{\lvert\Psi\rvert^4}{2} \right) \, \mathrm{d}V_\mathcal{M} + \frac{\alpha_1}{\alpha_2} \int_{\partial \mathcal{M}} \lvert\Psi\rvert^2 \, \mathrm{d}V_{\partial \mathcal{M}}.
\end{equation}
Here $\mathrm{d}V_{\mathcal{M}}$ and $\mathrm{d}V_{\partial \mathcal{M}}$ stand for the volume elements on $\mathcal{M}$ and $\partial \mathcal{M}$, respectively. Note that the boundary integral is absent if $\partial \mathcal{M}$ is empty, or in case of either Neumann ($\alpha_1 = 0$) or Dirichlet ($\alpha_2 = 0$) boundary conditions; see \eqref{robin-boundary}.

Recall the notation $\psi_j(s, \varphi) := \psi_j(s, \varphi \,\lvert\, 0,0)$ and let $\mu_j^* := \mu_j^*(0,0)$. Since $\Omega(0,0) = 0$ by Lemma \ref{lemma-existence} (ii), the elliptic equation \eqref{spiral-wave-equation} for $(\eta,\beta) = (0,0)$ reads
\begin{equation} \label{vortex-equation}
0 = \Delta_{m}  \psi_j + \lambda \left( 1 - \lvert\psi_j\rvert^2 \right)  \psi_j,
\end{equation}
and thus we \DIFaddbegin \DIFadd{\textcolor{red}{also} }\DIFaddend say that $\psi_j$ is a \textit{vortex equilibrium} of the Ginzburg--Landau equation \eqref{complex-GLe}. Since the radial part $u_j(s)$ of $\psi_j(s, \varphi)$ is real valued, it holds that
\begin{equation}
\mathcal{L}_j[V] := \mathcal{L}_j(0,0)[V] = \Delta_{\mathcal{M}} V + \lambda \left(  \left(1 - 2 \, \lvert\psi_j\rvert^2 \right) V - \lvert\psi_j\rvert^2 \, e^{2 i m \varphi} \, \overline{V} \right),
\end{equation}
where $\overline{V}$ denotes the complex conjugate of $V$. It follows that $\mathcal{L}_j$ is self-adjoint on $L^2(\mathcal{M}, \mathbb{C})$ with respect to the following inner product:
\begin{equation}
\langle V_1, V_2 \rangle_{L^2} := \mathrm{Re} \, \left( \int_\mathcal{M} V_1 \, \overline{V_2} \, \mathrm{d}V_{\mathcal{M}} \right)\DIFdelbegin \DIFdel{,    
}\DIFdelend \DIFaddbegin \DIFadd{.   
}\DIFaddend \end{equation}
\DIFdelbegin \DIFdel{and thus $\mathcal{L}_j$ has $\mu_j^*$ as the principal eigenvalue.
}\DIFdelend \DIFaddbegin \DIFadd{\textcolor{red}{We define the \emph{prinicipal eigenvalue} of $\mathcal{L}_j$ as the largest nontrivial eigenvalue, which indeed coincides with $\mu_j^*$ as defined in \eqref{principal-eigenvalue-general}.}
}\DIFaddend 

We first collect well-known instability results for the nodal class $j = 0$. In this class, by Lemma \ref{lemma-existence} (ii) the radial part $u_0(s)$ of $\psi_0(s, \varphi)$ does not change sign on $(0, s_*)$. Indeed, the vortex equilibrium $\psi_0$ is a minimizer of the Lyapunov functional \eqref{lyapunov-functional}. 

\begin{lemma}[Instability for $j =0$] \label{lem-instability-0} 
The following statements hold:
\begin{itemize}
\item[\emph{(i)}] Let $\mathcal{M}$ be the unit disk equipped with \DIFdelbegin \DIFdel{Dirichlet boundary conditions. Then $\mu_0^* > 0$ for any fixed $m \ge 2$ and sufficiently large $\lambda > 0$; see }\emph{\DIFdel{\mbox{
\cite[Theorem 2]{Mi05}}\hspace{0pt}
}}
\DIFdel{.
}
\item[\emph{\DIFdel{(ii)}}
]
\DIFdel{Let $\mathcal{M}$ be the unit disk equipped with }\DIFdelend Neumann boundary conditions. Then $\mu_0^* > 0$ for any fixed $m \in \mathbb{N}$ and
sufficiently large $\lambda > 0$; see \emph{\cite[Theorem 1.3]{Se05}}.
\DIFdelbegin 
\item[\emph{\DIFdel{(iii)}}
]
\DIFdelend \DIFaddbegin \item[\emph{\DIFadd{(ii)}}] \DIFaddend Let $\mathcal{M}$ be the unit $2$-sphere. Then $\mu_0^* \ge 0$ for any fixed $m \in \mathbb{N}$ and $\lambda > \lambda_0^m$; see \emph{\cite[Theorem 1.2]{DaRong20}}.
\end{itemize}
\end{lemma}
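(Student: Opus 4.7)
The plan is to establish $\mu_0^* > 0$ (respectively $\geq 0$) by exhibiting a test function $V$ with positive (respectively nonnegative) Rayleigh quotient for the self-adjoint operator $\mathcal{L}_0$. Since $\mathcal{L}_0$ is self-adjoint with respect to $\langle \cdot, \cdot \rangle_{L^2}$ and has compact resolvent, the variational characterization
\begin{equation*}
\mu_0^* \;=\; \sup_{V \not\equiv 0} \frac{\langle \mathcal{L}_0 V, V\rangle_{L^2}}{\|V\|_{L^2}^2}
\end{equation*}
reduces each statement to the construction of a single well-chosen perturbation.

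First, I would exploit the $S^1$ rotational symmetry in $\varphi$ to decompose $\mathcal{L}_0$ into angular modes. Substituting $V = e^{im\varphi} W$ and Fourier-expanding $W$ in $\varphi$, the coupling term $|\psi_0|^2 \, e^{2im\varphi}\,\overline{V}$ conjugates mode $\ell$ into mode $-\ell$, so the spectral problem for $\mathcal{L}_0$ decouples into a family of two-component radial eigenvalue problems on $[0,s_*]$ indexed by $\ell \in \mathbb{Z}_{\geq 0}$. In particular, the boundary/regularity conditions at $s=0$ (and at $s=s_*$ in the spherical case), as well as the Robin condition \eqref{robin-boundary}, are inherited mode-by-mode by the resulting radial operators. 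It then suffices to locate a single $\ell$ for which the lowest radial eigenvalue has the desired sign.

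For (i), following \cite{Mi05}, I would target the angular mode corresponding to \emph{vortex splitting}: as $\lambda \to \infty$ the energy of a degree-$m$ vortex is dominated by that of $m$ well-separated degree-one vortices, and matched asymptotic analysis of the radial profile $u_0(s)$ near the core $s=0$ produces an explicit test perturbation in that mode with strictly positive Rayleigh quotient for $\lambda \gg 1$. For (ii), the relevant mode instead corresponds to \emph{translation} of the vortex; Neumann boundary conditions allow the vortex to relax toward $\partial\mathcal{M}$, which renders even the $m=1$ solution unstable for large $\lambda$, as proved in \cite{Se05}. For (iii), the reflection symmetry \eqref{reflection-symmetry} combined with the antipodal vortex pair at $s=0$ and $s=s_*$ yields, via the argument in \cite{DaRong20}, a perturbation that is either a zero mode arising from a continuous symmetry or a competitor of strictly lower energy, giving $\mu_0^* \geq 0$.

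The main obstacle in each case is the \emph{quantitative spectral estimate in the semiclassical regime} $\lambda \to \infty$. The reduced radial operators are singularly perturbed, with an interior layer near the vortex core, and one must control the profile $u_0(s)$ and the candidate eigenfunction with sufficient precision to determine the sign of the Rayleigh quotient. The symmetry reduction into angular modes and the self-adjoint variational principle are straightforward in our geometric framework; the genuine difficulty lies in the matched asymptotics for the profile $u_0$ and the test perturbation, which I would defer to the cited works \cite{Mi05, Se05, DaRong20}.
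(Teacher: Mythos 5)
The paper itself offers no proof of this lemma: it is stated purely as a collection of results quoted from \cite{Mi05}, \cite{Se05}, and \cite{DaRong20}, and your proposal likewise defers all of the decisive quantitative work (the large-$\lambda$ asymptotics of the profile and the sign of the Rayleigh quotient) to those same references, so the two approaches coincide in substance. Your additional framing---self-adjointness of $\mathcal{L}_0$ in the variational case, the variational characterization of $\mu_0^*$, and the decomposition into angular Fourier modes---is consistent with the spectral setup the paper carries out later in Section \ref{subsec:no-control}, and is an accurate gloss on the mechanisms (vortex splitting, boundary drift under Neumann conditions, symmetry-induced zero modes on the sphere) underlying the cited instability results.
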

\DIFdelbegin 

\DIFdel{Note that when $j = 0$ and $\mathcal{M}$ is the unit disk equipped with }\DIFdelend \DIFaddbegin {\color{red}
\begin{remark}
\DIFadd{In the case of circular geometry, we consider homogeneous boundary conditions }\eqref{robin-boundary}\DIFadd{, while we are well aware of the existence and stability results with inhomogeneous }\DIFaddend Dirichlet boundary conditions \DIFdelbegin \DIFdel{, then $\mu_0^* < 0$ if $m = 1$; see \mbox{
\cite[Theorem 1]{Mi05}}\hspace{0pt}
. Heuristically, one-armed vortex equilibria are more likely to be stable than multi-armed ones, because their local shape is robust under general $L^2$-perturbations.}\DIFdelend \DIFaddbegin \DIFadd{$\Psi(t, s_*, \varphi) = e^{i \tilde{m} \varphi}$ for fixed $\tilde{m} \in \mathbb{N}$; see \mbox{
\cite{Mi05} }\hspace{0pt}
for instance. In fact, our choice of boundary conditions originates from the application to feedback control. An important feature of our results is that the control is }\emph{\DIFadd{pattern-selective}}\DIFadd{, i.e., we are able to select and stabilize certain spiral waves over all other spiral waves present in the uncontrolled system. Homogeneous boundary conditions highlight this feature, since }\emph{\DIFadd{all}} \DIFadd{$m$-armed spiral waves are present in the Ginzburg--Landau equation }\eqref{complex-GLe}\DIFadd{. In contrast, for the inhomogeneous Dirichlet boundary conditions $\Psi(t, s_*, \varphi) = e^{i \tilde{m} \varphi}$ only the $\tilde{m}$-armed spiral waves are present in }\eqref{complex-GLe}\DIFadd{, and hence the inhomogeneity already restricts (or, in a sense, `selects') the spiral waves.
}\end{remark}
}
\DIFaddend 

\DIFdelbegin \DIFdel{For other nodal classes $j \in \mathbb{N}$, the radial part $u_j(s)$ of $\psi_j(s,\varphi)$ changes sign exactly $j$-times on $(0, s_*)$, Dai proved the following instability result in \mbox{
\cite[Theorem 1.3]{DaLa21}}\hspace{0pt}
.
}\DIFdelend \DIFaddbegin \DIFadd{\textcolor{red}{The next lemma asserts the instability of vortex equilibria for all other nodal classes $j \in \mathbb{N}$, for which the radial part $u_j(s)$ of $\psi_j(s,\varphi)$ changes sign exactly $j$-times on $(0, s_*)$. \textcolor{red}{In this case, the instability is caused by radial perturbations and the proof is based on a shooting argument; see \cite[Theorem 1.3]{DaLa21}.} 
}
}\DIFaddend 

\begin{lemma}[Instability for $j \in \mathbb{N}$] \label{lem-instability-1} For both circular and spherical geometries, if $j \in \mathbb{N}$, then $\mu_j^* > 0$ for any fixed $m \in \mathbb{N}$ and $\lambda > \lambda^m_j$.
\end{lemma}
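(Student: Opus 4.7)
The plan is to expose a scalar, self-adjoint sub-block of $\mathcal{L}_j$ for which the radial profile $u_j$ itself is a sign-changing eigenfunction at eigenvalue zero. Nodal counting then forces a strictly positive eigenvalue above, which immediately yields $\mu_j^\ast>0$.

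First I would restrict $\mathcal{L}_j$ to the invariant subspace of perturbations of the form $V(s,\varphi)=i\,v(s)\,e^{im\varphi}$ with $v$ real. A direct computation, using $|\psi_j|^2=u_j^2$ and $e^{2im\varphi}\overline{V}=-iv(s)e^{im\varphi}$, gives the exact cancellation
\[
\mathcal{L}_j\bigl[i v(s)e^{im\varphi}\bigr]\;=\;i\Bigl(T_m v + \lambda(1-u_j^2)\,v\Bigr)\,e^{im\varphi}\;=:\;i\,(Bv)\,e^{im\varphi},
\]
where $T_m v = v'' + (a'/a)v' - (m^2/a^2)v$ is the radial part of $\Delta_m$ from \eqref{restricted-operator}. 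Thus $\mathcal{L}_j$ has the same eigenvalues on this subspace as the scalar, self-adjoint operator $B:=T_m+\lambda(1-u_j^2)$ acting on the weighted space $L^2_a([0,s_\ast])$ (with the boundary conditions inherited from \eqref{robin-boundary} in circular geometry, or from regularity at both poles in spherical geometry). In particular, every eigenvalue of $B$ is an eigenvalue of $\mathcal{L}_j$.

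Next, specializing the vortex equation \eqref{vortex-equation} to $\psi_j=u_j\,e^{im\varphi}$ yields $T_m u_j + \lambda(1-u_j^2)u_j = 0$, i.e.\ $B u_j=0$. Hence $u_j$ is an eigenfunction of $B$ with eigenvalue $0$. Now by Lemma~\ref{lemma-existence}(ii) the profile $u_j$ has $j\ge 1$ simple zeros in $(0,s_\ast)$, so $u_j$ is sign-changing. Since $B$ is a (singular weighted) Sturm--Liouville operator with discrete spectrum bounded from above and eigenvalues ordered $\mu_0^B>\mu_1^B>\cdots$, the Sturm nodal theorem asserts that the $k$-th eigenfunction from the top has exactly $k$ interior zeros; the sign-changing eigenfunction $u_j$ with $j$ zeros therefore belongs to $\mu_j^B=0$, and consequently
\[
\mu_0^B>\mu_1^B>\cdots>\mu_{j-1}^B>0.
\]
Combining this with the embedding $\sigma(B)\subset\sigma(\mathcal{L}_j)$ established above gives $\mu_j^\ast\ge \mu_0^B>0$, as required.

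The main obstacle is justifying the Sturm nodal theorem for the singular Sturm--Liouville operator $B$: the weight $a(s)$ vanishes at $s=0$ (and at $s_\ast$ in spherical geometry) and the potential $m^2/a^2$ blows up there, so one must verify that the natural domain of $B$ (built from regularity at the poles together with the Robin condition \eqref{robin-boundary} on $\partial\mathcal{M}$) yields a self-adjoint operator with discrete spectrum to which the classical nodal theorem still applies. This is precisely the framework under which $u_j$ was constructed in \cite{Da20,DaLa21}, and the existence proof of Lemma~\ref{lemma-existence}(ii) already invokes exactly this nodal theory, so one can appeal to the same reference. Once this functional-analytic setup is in place, the three-step argument above is immediate.
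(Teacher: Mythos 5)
Your argument is correct and is essentially the route the paper itself relies on: the paper states this lemma by citation to \cite{DaLa21}, but the identical mechanism — restricting to the gauge direction $i\,v(s)\,e^{im\varphi}$, recognizing the scalar self-adjoint operator $\Delta_m + \lambda\,(1-u_j^2)$ with $u_j$ as a kernel eigenfunction having $j$ interior zeros, and invoking the singular Sturm--Liouville/shooting nodal theory of \cite{Da20, DaLa21} to conclude that the unstable dimension equals $j \ge 1$ — appears verbatim in the proof of Lemma \ref{lemma-spectral-no-control-Lm}(i). The only technical point is the validity of the nodal theorem for the singular weighted operator, which you correctly flag and which the paper likewise defers to the shooting argument of \cite{DaLa21}.
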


The instability results in Lemma \ref{lem-instability-0} and Lemma \ref{lem-instability-1} motivate us to stabilize those unstable $m$-armed spiral waves for sufficiently small parameters $0 \le \lvert\eta\rvert, \, \lvert\beta\rvert \ll 1$ through noninvasive symmetry-breaking controls.

\section{Symmetry-breaking controls and main results} \label{sec;results}

\DIFdelbegin \DIFdel{In this article we introduce the explicit control operator 
}\begin{displaymath} \DIFdel{
\mathcal{C}_{(h, 
\tau, (\iota, \zeta))}[\Psi](t,s, \varphi) := h \, \Psi(t-\tau, R_\iota(s), \varphi - \zeta) 
}\end{displaymath}
\DIFdel{with the control triple $(h, \tau, (\iota, \zeta))$. Here }\DIFdelend \DIFaddbegin \DIFadd{\textcolor{red}{We recall that the surface of revolution $\mathcal{M}$ has rotational symmetry on the $\varphi$-variable, and reflection symmetry when the boundary $\partial \mathcal{M}$ is empty. These symmetries allow us to design the control triple explicitly. More precisely, we define the control operator as
\begin{equation} \label{control-operator-explicit}
\mathcal{C}_{(h, 
\tau, (\iota, \zeta))}[\Psi](t,s, \varphi) := h \, \Psi(t-\tau, R_\iota(s), \varphi - \zeta), 
\end{equation}
where $h \in \mathbb{C}$ is a multiplicative factor, $\tau \geq 0$ is a time delay}, and }\DIFaddend the space shift, denoted by $(\iota, \zeta) \in \{+,-\} \times S^1$, consists of 
\begin{align} 
R_{\iota}(s) := \left\{
\begin{array}{ll}
s \quad & \mbox{if   } \iota = +,
\\
s_* - s & \mbox{if   } \iota = -, \mbox{   when   } \partial \mathcal{M} \mbox{   is empty},
\end{array} \right.
\end{align}
and a rotation $\zeta \in S^1$ \DIFdelbegin \DIFdel{in }\DIFdelend \DIFaddbegin \DIFadd{on }\DIFaddend the $\varphi$-variable. \DIFdelbegin \DIFdel{Hence }\DIFdelend \DIFaddbegin \DIFadd{\textcolor{red}{With this notation,} }\DIFaddend we consider the following control system for the Ginzburg--Landau equation \eqref{complex-GLe}:
\begin{equation} \label{control-system}
\begin{split}
\partial_t \Psi & = (1 + i \, \eta) \, \Delta_{\mathcal{M}} \Psi + \lambda  \left(1-\lvert\Psi\rvert^2 - i \, \beta \, \lvert\Psi\rvert^2 \right) \Psi 
\\&
\quad + b \, \left(\Psi - h \, \Psi(t-\tau, R_{\iota}(s), \varphi - \zeta) \right),
\end{split}
\end{equation}
where $\Psi = \Psi(t, s, \varphi)$. We call 
\begin{equation} \label{control-term-explicit}
b \, (\Psi - h \, \Psi(t-\tau, R_{\iota}(s), \varphi - \zeta))
\end{equation}
a symmetry-breaking control because its design is based on the spatio-temporal symmetries of the \DIFdelbegin \DIFdel{target }\DIFdelend \DIFaddbegin \DIFadd{targeted }\DIFaddend spiral waves and not on the full equivariance of the uncontrolled system. In our Ginzburg--Landau setting, spiral waves obtained in Lemma \ref{lemma-existence} are triggered by symmetry-breaking bifurcation from the trivial equilibrium $\Psi \equiv 0$ under the following $(S^1 \times \Gamma_\mathcal{M})$-equivariance:
\begin{equation} \label{equivariance}
\left((\omega, \gamma)\Psi\right)(t, x) := e^{-i \omega} \, \Psi \left(t, \gamma^{-1}x\right) \quad \mbox{for   } t \ge 0, \,x \in \mathcal{M}.
\end{equation}
Here $S^1$ results from the gauge symmetry \eqref{gauge-symmetry} and $\Gamma_\mathcal{M}$ is a matrix group containing symmetries of $\mathcal{M}$. In polar coordinates \eqref{polar-coordinates}, when $\partial \mathcal{M}$ is nonempty, then $\Gamma_{\mathcal{M}} = \textbf{SO}(2, \mathbb{R}) \cong S^1$ and $\gamma^{-1}$ in \eqref{equivariance} induces a rotation $-\zeta \in S^1$ of the $\varphi$-variable in the control term \eqref{control-term-explicit}. When $\partial \mathcal{M}$ is empty, then $\Gamma_\mathcal{M} = \textbf{O}(2,\mathbb{R})$ due to the reflection symmetry \eqref{reflection-symmetry} on $\mathcal{M}$, and the reflection $x \mapsto -x$ induces $R_-(s) = s_* - s$ in the control term \eqref{control-term-explicit}. 

For fixed $(\eta, \beta) \in \mathbb{R}^2$, $m \in \mathbb{N}$, $\lambda > \lambda_j^m$, and $j \in \mathbb{N}_0$, let
\begin{equation} \label{general-spiral-wave}
\Psi_j(t, s, \varphi \,\lvert\, \eta, \beta) := e^{-i \Omega(\eta, \beta) t} \, u_j(s \,\lvert\, \eta, \beta) \, e^{im\varphi}
\end{equation}
be a solution satisfying the Ansatz \eqref{spiral-ansatz} obtained in Lemma \ref{lemma-existence}. 
We now determine multiplicative factors $h \in \mathbb{C}$ so that the control term \eqref{control-term-explicit} is noninvasive \DIFdelbegin \DIFdel{at }\DIFdelend \DIFaddbegin \DIFadd{on }\DIFaddend $\Psi_j$, i.e.,
\begin{equation} \label{noninvasive-equation}
\Psi_j(t,s, \varphi \,\lvert\, \eta, \beta) - h \, \Psi_j(t-\tau, R_\iota(s), \varphi-\zeta  \,\lvert\, \eta, \beta) = 0 
\end{equation}
holds for $t \ge 0$, $s \in [0, s_*]$, and $\varphi \in S^1$. There are two cases.

Case 1: $\iota = +$ and thus $R_+(s) = s$. Substituting \eqref{general-spiral-wave} into \eqref{noninvasive-equation} yields
\begin{equation} \label{noninvasive-h-0}
h = h(\tau, \zeta \,\lvert\, \eta, \beta) = e^{i(-\Omega(\eta,\beta) \tau + m \zeta)}.
\end{equation}
This case does not require any symmetry assumptions of $\mathcal{M}$ on the $s$-variable, and so it is applicable for both circular and spherical geometries.

Case 2: $\iota = -$ and thus $R_-(s) = s_*-s$. Substituting \eqref{general-spiral-wave} into \eqref{noninvasive-equation} implies that
\begin{equation} \label{noninvasive-equation-h-new}
u_j(s \,\lvert\, \eta, \beta) - h \, e^{i \Omega(\eta, \beta) \tau} u_j(s_* - s \,\lvert\, \eta, \beta) \, e^{-im \zeta} = 0 
\end{equation}
holds for $s \in [0, s_*]$. The $\mathbb{Z}_2$-radial-symmetry in Lemma \ref{lemma-existence} (i) allows us to choose 
\begin{equation} \label{noninvasive-h-1}
h = h(\tau, \zeta \,\lvert\, \eta, \beta) = (-1)^j \, e^{i(-\Omega(\eta, \beta) \tau + m \zeta)}.
\end{equation}
\DIFdelbegin 

\DIFdelend As a result, we choose the multiplicative factors $h \in \mathbb{C}$ in \eqref{control-term-explicit} as follows:
\begin{equation} \label{noninvasive-general}
h = h(\tau, \zeta \,\lvert\, \eta, \beta)
= \left\{
\begin{array}{ll}
e^{i(-\Omega(\eta, \beta) \tau + m \zeta)}, & \mbox{if   } \iota = +,
\\ 
(-1)^j \, e^{i(-\Omega(\eta, \beta) \tau + m \zeta)}, & \mbox{if   } \iota = -, \, \mbox{when   } \partial \mathcal{M} \mbox{   is empty}.
\end{array} \right.
\end{equation}

From \eqref{noninvasive-general} we see that at the selected spiral wave $\Psi_j$ the time delay $\tau \ge 0$ itself induces an \textit{external rotation} $\Psi_j \mapsto e^{-i \Omega(\eta, \beta) \tau}\, \Psi_j$, while the space shift $\zeta \in S^1$ itself induces another external rotation $\Psi_j \mapsto e^{im\zeta}\, \Psi_j$. Hence on $\Psi_j$ time delays and space shifts are interchangeable. However, for the whole control system \eqref{control-system} time delays and space shifts trigger very different dynamical effects. Time delays and space shifts are also different from the viewpoint of implementation: For purely spatial control (i.e., $\tau = 0$) the rotation frequency $\Omega(\eta, \beta) \in \mathbb{R}$ does not appear in the control term, and thus purely spatial control can also be implemented when $\Omega(\eta, \beta)$ is unknown.

Intuitively, the spirit of feedback stabilization is that while the control term \eqref{control-term-explicit} vanishes on
the selected spiral wave $\Psi_j$, it should not vanish on the space spanned by all unstable and center eigenfunctions associated with $\Psi_j$.
Since spiral waves are only known to exist for sufficiently small parameters $0 \le \lvert\eta\rvert, \, \lvert\beta\rvert \ll 1$ (see Lemma \ref{lemma-existence}), it suffices the consider the variational case $(\eta, \beta) = (0,0)$ for their local stability analysis (see Lemma \ref{lemma-upper-semicontinuous}). For the case $(\eta, \beta) = (0,0)$ it holds that $\Omega(0,0) = 0$ and thus $\Psi_j = \psi_j$ is a vortex equilibrium and \eqref{noninvasive-general} becomes
\begin{equation} \label{noninvasive-variational}
h = h(\tau, \zeta \,\lvert\, 0, 0) =
\left\{
\DIFdelbegin 
\DIFdelend \DIFaddbegin \begin{array}{ll}
e^{im \zeta}, & \mbox{if   } \iota = +,
\\ 
(-1)^j \, e^{i m \zeta}, & \mbox{if   } \iota = -, \, \mbox{when   } \partial \mathcal{M} \mbox{   is empty}.
\end{array}\DIFaddend  \right.
\end{equation}
When $\iota = +$, the control term \eqref{control-term-explicit} vanishes on all eigenfunctions $v(s) \, e^{im\varphi}$ in $L_m^2(\mathbb{C})$ and stabilization is only possible if $\psi_j$ is already locally \DIFdelbegin \DIFdel{asymptotically }\DIFdelend \DIFaddbegin \DIFadd{\textcolor{red}{exponentially} }\DIFaddend stable in $L_m^2(\mathbb{C})$, that is, only if $j = 0$; see the spectral structure on $L_m^2(\mathbb{C})$ in Lemma \ref{lemma-spectral-no-control-Lm} (i). On the other hand, when $\iota = -$, the control term vanishes on eigenfunctions $v(s) \, e^{im\varphi}$ in $L_m^2(\mathbb{C})$ that are either \DIFdelbegin \textit{\DIFdel{even-symmetric}} 
\DIFdelend \DIFaddbegin \DIFadd{even-symmetric }\DIFaddend (i.e., \DIFdelbegin \DIFdel{$v(s) = v(s_* - s)$) or }\textit{\DIFdel{odd-symmetric}} 
\DIFdelend \DIFaddbegin \DIFadd{$v(s_* - s) = v(s)$) or odd-symmetric }\DIFaddend (i.e., \DIFdelbegin \DIFdel{$v(s) = -v(s_*-s)$}\DIFdelend \DIFaddbegin \DIFadd{$v(s_*-s) = -v(s)$}\DIFaddend ). In this case, stabilization is possible only if $j = 0, 1$; see Lemma \ref{lemma-spectral-no-control-Lm}. So the control term \eqref{control-term-explicit} only allows us to aim stabilization for the two nodal classes: $j = 0$, and also $j = 1$ when $\partial \mathcal{M}$ is empty. 

Our main results consist of two theorems, which assert that stabilization is indeed achieved for the two nodal classes: The first theorem applies to the class $j = 0$ in both circular and spherical geometries, and the second theorem applies to the class $j = 1$ in spherical geometry.

\begin{theorem}[Selective stabilization of $m$-armed spiral waves for $j = 0$ in circular and spherical geometries] \label{theorem-0} Fix $m \in \mathbb{N}$, $\lambda > \lambda_0^m$, and let 
\begin{equation} \label{spiral-wave-0}
\Psi_0(t, s,  \varphi \,\lvert\, \eta, \beta) = e^{-i \Omega(\eta, \beta) t} \, \psi_0(s, \varphi \,\lvert\, \eta, \beta)
\end{equation}
be the $m$-armed spiral wave obtained in Lemma \ref{lemma-existence}, where $\psi_0(s, \varphi \,\lvert\, \eta, \beta) = u_0(s \,\lvert\, \eta, \beta) \, e^{im\varphi}$ has the even-symmetric radial part $u_0$.

Then for all but finitely many choices of $\zeta \in S^1$, there exists a constant $\tilde{b} = \tilde{b}(\zeta) < 0$ such that each $b \le \tilde{b}$ admits a constant $\tilde{\tau} = \tilde{\tau}(\zeta, b) > 0$ for which $\Psi_0$ becomes a locally \DIFdelbegin \DIFdel{asymptotically }\DIFdelend \DIFaddbegin \DIFadd{\textcolor{red}{exponentially} }\DIFaddend stable solution of the control system
\begin{equation} \label{control-system-0}
\begin{split}
\partial_t \Psi & = (1 + i \, \eta) \, \Delta_{\mathcal{M}} \Psi + \lambda  \left(1-\lvert\Psi\rvert^2 - i \, \beta \, \lvert\Psi\rvert^2 \right) \Psi 
\\&
\quad + b \, \left(\Psi - e^{i(-\Omega(\eta, \beta) \tau + m \zeta)} \, \Psi(t-\tau, s, \varphi - \zeta) \right)
\end{split}
\end{equation}
for all $\tau \in [0, \tilde{\tau})$ and $(\eta, \beta) \in (-\varepsilon, \varepsilon)$ with $\varepsilon > 0$ small enough.
\end{theorem}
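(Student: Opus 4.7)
By Lemma~\ref{lemma-upper-semicontinuous} applied to the controlled semiflow---whose generator depends continuously on $(\eta,\beta)$ in the operator norm by the same bounded-perturbation argument as in its proof---it suffices to establish local asymptotic stability at $(\eta,\beta)=(0,0)$. There the spiral wave reduces to the vortex equilibrium $\psi_0 = u_0(s)\,e^{im\varphi}$, the rotation frequency vanishes, and the noninvasive factor \eqref{noninvasive-variational} becomes $h_0 = e^{im\zeta}$. Hence the controlled linearization reads
\[
\mathcal{L}_0^{b,\tau,\zeta}[V] := \mathcal{L}_0 V + b\bigl(V - e^{im\zeta}\,V(t-\tau,s,\varphi-\zeta)\bigr),
\]
and the task is to push its spectrum into the open left half-plane, apart from the simple zero forced by the gauge symmetry \eqref{gauge-symmetry}.

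My first step is to decompose $L^2(\mathcal{M},\mathbb{C})$ into rotational Fourier modes in $\varphi$. The antilinear term $-\lambda u_0^2\,e^{2im\varphi}\overline{V}$ in $\mathcal{L}_0$ couples only the pair $(n,\,2m-n)$, so each block $W_n := L_n^2(\mathbb{C}) \oplus L_{2m-n}^2(\mathbb{C})$ (with $W_m = L_m^2(\mathbb{C})$) is invariant under both $\mathcal{L}_0$ and the control operator. On $W_m$ the $\varphi$-shift acts as $e^{-im\zeta}$, cancelling $h_0$, so the control reduces to the pure time-delay Pyragas feedback $b(v(t,s)-v(t-\tau,s))$ on the radial profile. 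Splitting $\mathcal{L}_0|_{L_m^2(\mathbb{C})}$ by real and imaginary parts of $v(s)$ yields the Schr\"odinger-type operators $\Delta_m + \lambda(1-3u_0^2)$ and $\Delta_m + \lambda(1-u_0^2)$; for $j = 0$, $u_0 > 0$ on $(0,s_*)$ is the strictly positive principal eigenfunction of the second with eigenvalue $0$ (by \eqref{vortex-equation}), and monotonicity of the principal eigenvalue makes the first strictly negative. Hence $\sigma(\mathcal{L}_0|_{L_m^2(\mathbb{C})}) \subset (-\infty,-\delta]\cup\{0\}$, and for $b<0$ the scalar characteristic equation $\nu = \mu + b(1-e^{-\nu\tau})$ attached to each $\mu \leq 0$ has no right-half-plane roots by the standard delay-independent bound.

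On each remaining block $W_n$ with $n \neq m$, write $\Phi = v(s)\,e^{in\varphi} + w(s)\,e^{i(2m-n)\varphi}$; the shift acts by $\mathrm{diag}(\alpha_n,\alpha_n^{-1})$ with $\alpha_n := e^{i(m-n)\zeta}$. Passing to the auxiliary pair $(v,\overline{w})$ unfolds the antilinear coupling into a $\mathbb{C}$-linear problem, and the characteristic equation of $\mathcal{L}_0^{b,\tau,\zeta}$ on $W_n$ reduces to
\[
\nu = \mu + b\bigl(1 - \alpha_n\,e^{-\nu\tau}\bigr),
\]
with $\mu$ running through the real spectrum of the self-adjoint matrix operator
\[
A_n := \begin{pmatrix} \Delta_n + \lambda(1 - 2u_0^2) & -\lambda u_0^2 \\ -\lambda u_0^2 & \Delta_{2m-n} + \lambda(1 - 2u_0^2) \end{pmatrix}.
\]
At $\tau = 0$ one has $\mathrm{Re}\,\nu = \mu + b\bigl(1 - \cos((m-n)\zeta)\bigr)$, and the coefficient is strictly positive unless $(m-n)\zeta \in 2\pi\mathbb{Z}$. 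By Lemmas~\ref{lem-instability-0} and~\ref{lem-instability-1} together with sectoriality of $\mathcal{L}_0$, only finitely many blocks $W_n$ carry a non-negative eigenvalue $\mu$; excluding the finite set of $\zeta \in S^1$ for which $(m-n)\zeta \in 2\pi\mathbb{Z}$ holds for one of these $n$ produces the ``all but finitely many'' condition of the theorem, and a sufficiently negative $b$ then sends each offending $\mu$ strictly into the open left half-plane. The uniformly bounded-below negative part of $\sigma(A_n)$ is handled by a compactness argument, and complex roots of the full delay characteristic equation are excluded by a Rouch\'e-type comparison with the $\tau = 0$ picture.

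Small $\tau > 0$ is then accommodated by continuous dependence of the spectrum of the delay generator on $\tau$ within any half-plane $\{\mathrm{Re}\,\nu \geq -\delta'\}$, yielding the threshold $\tilde\tau(\zeta,b) > 0$. The persistent simple zero on $W_m$ is treated by the standard orbital-stability reduction to a one-dimensional center manifold transverse to the $S^1$-gauge orbit, delivering local asymptotic stability of $\psi_0$ modulo the gauge action; a final application of Lemma~\ref{lemma-upper-semicontinuous} in the delay setting transfers this stability to a small $(\eta,\beta)$-neighbourhood of the variational point. I expect the chief obstacle to be the block-wise delay analysis on each $W_n$: simultaneously handling the antilinear $(v,\overline{w})$-coupling, the transcendental character of the characteristic equation for complex $\nu$, and uniformity in $n$ so that the exclusion set for $\zeta$ remains finite.
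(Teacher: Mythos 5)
Your overall route is the paper's own: decompose the linearization into rotational Fourier blocks on which the antilinear term $-\lambda u_0^2 e^{2im\varphi}\overline V$ acts, reduce each block to a scalar transcendental characteristic equation $\nu=\hat\mu+b\bigl(1-\alpha_n e^{-\nu\tau}\bigr)$ with $\hat\mu$ real, achieve stabilization at $\tau=0$ by the space shift alone via $\mathrm{Re}\le\hat\mu+b\bigl(1-\cos((m-n)\zeta)\bigr)$, and then perturb in $\tau$ and in $(\eta,\beta)$. Your block $W_n=L_n^2(\mathbb{C})\oplus L_{2m-n}^2(\mathbb{C})$ with the $(v,\overline w)$ unfolding is exactly the paper's shifted decomposition $W=Ve^{-im\varphi}$, $W=P+iQ$ in different clothes (your index $n$ is the paper's $n-m$), and your treatment of the resonant block $L_m^2(\mathbb{C})$ — positivity of $u_0$ identifies $0$ as the simple principal eigenvalue of $\Delta_m+\lambda(1-u_0^2)$, and the disk bound $\lvert\nu-\hat\mu-b\rvert\le\lvert b\rvert$ for $\mathrm{Re}\,\nu\ge0$ kills the pure-delay term there — is correct and in fact a little cleaner than the paper's appeal to the shooting-method Lemma \ref{lemma-spectral-no-control-Lm}. (Minor miscitation: finiteness of the blocks carrying nonnegative $\hat\mu$ comes from sectoriality/compact resolvent as in Lemma \ref{lemma-spectral-no-control}, not from the instability Lemmas \ref{lem-instability-0} and \ref{lem-instability-1}.)

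The one substantive gap is the step you dispatch with ``continuous dependence of the spectrum of the delay generator on $\tau$ within any half-plane.'' That statement is not an off-the-shelf fact here: the passage from $\tau=0$ to $\tau>0$ is a singular perturbation under which each of the infinitely many blocks acquires infinitely many new characteristic roots emerging from $\mathrm{Re}\,\nu=-\infty$, and one must show that none of them reaches $\{\mathrm{Re}\,\nu\ge-\delta\}$ for a threshold $\tilde\tau$ that is \emph{uniform} over all $n\in\mathbb{Z}$ and all $\hat\mu\in\sigma(\mathcal{L}_{0,n})$. You yourself flag ``uniformity in $n$'' as the chief obstacle but do not resolve it; this is precisely where the paper's Lemma \ref{lemma-time-delay-perturbation-0} does its quantitative work, squaring the characteristic equations to obtain the explicit $\hat\mu$- and $n$-independent lower bound $\underline\tau=\delta^{-1}\log\bigl(1+\delta/\lvert b\rvert\bigr)$ for all branches with $\hat\mu\le-2\delta$, and invoking upper semicontinuity only for the finitely many remaining eigenvalues. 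Note that your own disk estimate nearly closes this gap: since $\lvert\alpha_n\rvert=1$, any root with $\mathrm{Re}\,\nu\ge0$ satisfies $\mathrm{Re}\,\nu\le\hat\mu$ and $\lvert\mathrm{Im}\,\nu\rvert\le\lvert b\rvert$ in \emph{every} block and for \emph{every} $\tau\ge0$, which disposes of all $\hat\mu\le0$ delay-independently and confines the remaining finitely many unstable $\hat\mu>0$ to a compact set on which your Rouch\'e comparison with $\tau=0$ applies; you would need to say this explicitly. A parallel (smaller) caveat applies to the final step: Lemma \ref{lemma-upper-semicontinuous} as stated concerns the undelayed semiflow, and its controlled, delayed analogue (operator-norm convergence and eventual compactness of the solution operator for $t>\tau$) is what the paper actually proves in the body of the proof of Theorem \ref{theorem-0}.
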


\begin{theorem}[Selective stabilization of $m$-armed spiral waves for $j = 1$ in spherical geometry] \label{theorem-1} Suppose that $\partial \mathcal{M}$ is empty and the reflection symmetry \eqref{reflection-symmetry} holds. Fix $m \in \mathbb{N}$, $\lambda > \lambda_1^m$, and let 
\begin{equation} \label{spiral-wave-1}
\Psi_1(t, s,  \varphi \,\lvert\, \eta, \beta) = e^{-i \Omega(\eta, \beta) t} \, \psi_1(s, \varphi \,\lvert\, \eta, \beta)
\end{equation}
be the $m$-armed spiral wave obtained in Lemma \ref{lemma-existence}, where $\psi_1(s, \varphi \,\lvert\, \eta, \beta) = u_1(s \,\lvert\, \eta, \beta) \, e^{im\varphi}$ has the odd-symmetric radial part $u_1$.

Then for all but finitely many choices of $\zeta \in S^1$, there exists a constant $\tilde{b} = \tilde{b}(\zeta) < 0$ such that each $b \le \tilde{b}$ admits a constant $\tilde{\tau} = \tilde{\tau}(\zeta, b) > 0$ for which $\Psi_1$ becomes a locally \DIFdelbegin \DIFdel{asymptotically }\DIFdelend \DIFaddbegin \DIFadd{\textcolor{red}{exponentially} }\DIFaddend stable solution of the control system
\begin{equation} \label{control-system-1}
\begin{split}
\partial_t \Psi & = (1 + i \, \eta) \, \Delta_{\mathcal{M}} \Psi + \lambda  \left(1-\lvert\Psi\rvert^2 - i \, \beta \, \lvert\Psi\rvert^2 \right) \Psi 
\\&
\quad + b \, \left(\Psi - (-1) \, e^{i(-\Omega(\eta, \beta) \tau + m \zeta)} \, \Psi(t-\tau, s_*-s, \varphi - \zeta) \right)
\end{split}
\end{equation}
for all $\tau \in [0, \tilde{\tau})$ and $(\eta, \beta) \in (-\varepsilon, \varepsilon)$ with $\varepsilon > 0$ small enough.
\end{theorem}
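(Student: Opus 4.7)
The plan is to adapt the proof of Theorem \ref{theorem-0} to the $j = 1$, $\iota = -$ setting, accounting for three differences: the reflection $R_-: s \mapsto s_* - s$ now appears in the control, the multiplicative factor carries the extra sign $(-1)^j = -1$, and the target equilibrium $\psi_1$ has odd-symmetric radial part. By Lemma \ref{lemma-upper-semicontinuous} it suffices to work at the variational case $(\eta, \beta) = (0,0)$, where $\Omega(0,0) = 0$, $\Psi_1 = \psi_1 = u_1(s)\, e^{im\varphi}$ with $u_1$ odd-symmetric, and \eqref{noninvasive-variational} forces $h = -\,e^{im\zeta}$. Substituting $\Psi = \psi_1 + e^{\mu t} W$ in the linearization of \eqref{control-system-1} yields the eigenvalue problem
\begin{equation*}
\mu\, W \,=\, \mathcal{L}_1 W \,+\, b \bigl( W + e^{im\zeta}\, e^{-\mu \tau}\, W(s_* - s, \varphi - \zeta) \bigr).
\end{equation*}

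I then decompose $L^2(\mathcal{M}, \mathbb{C})$ as an $\mathbb{R}$-vector space into $\mathcal{L}_1$-invariant subspaces. The antilinear term $|\psi_1|^2 e^{2im\varphi} \overline{V}$ in $\mathcal{L}_1$ couples only Fourier modes $n$ and $2m - n$ in the $\varphi$-variable, so the self-coupled mode $n = m$ leaves $L_m^2(\mathbb{C})$ invariant while the remaining modes pair into two-dimensional Fourier-coupled subspaces $\mathcal{V}_n$ for $n \neq m$. Since $u_1^2$ is $R_-$-even, both $\mathcal{L}_1$ and the control operator respect $R_-$-parity on $L_m^2(\mathbb{C})$, which therefore splits further into its $R_-$-even and $R_-$-odd parts.

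The crucial new step, absent from the proof of Theorem \ref{theorem-0}, is to show that $\mathcal{L}_1$ restricted to the $R_-$-odd part of $L_m^2(\mathbb{C})$ has its spectrum in $(-\infty, 0]$, with $0$ simple and corresponding only to the gauge direction $i\psi_1$. On that subspace the control is by construction invisible, so stability cannot be engineered and must come from $\mathcal{L}_1$ alone. Writing $W = (v_R + i v_I)\, e^{im\varphi}$ decouples $\mathcal{L}_1$ into the self-adjoint Schr\"odinger-type radial operators $L_R := \Delta_m + \lambda(1 - 3u_1^2)$ and $L_I := \Delta_m + \lambda(1 - u_1^2)$, both $R_-$-parity preserving because $u_1^2$ is. The vortex equation \eqref{vortex-equation} gives $L_I u_1 = 0$, and by Lemma \ref{lemma-existence} (i)-(ii) the unique zero of $u_1$ on $(0, s_*)$ sits at $s = s_*/2$, so $u_1$ is sign-definite on $(0, s_*/2)$; Sturm--Liouville oscillation theory on the symmetric interval then identifies $u_1$ as the top $R_-$-odd eigenfunction of $L_I$ with eigenvalue $0$, all other $R_-$-odd eigenvalues being strictly negative. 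Since $L_R = L_I - 2\lambda u_1^2$ with $u_1^2 > 0$ almost everywhere, the min-max principle yields that every $R_-$-odd eigenvalue of $L_R$ is strictly negative as well.

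On the remaining pieces the control is non-trivial and handles the stabilization: on the $R_-$-even part of $L_m^2$, the operator $R_- T_\zeta$ acts as multiplication by the scalar $e^{-im\zeta}$, reducing the characteristic equation on each eigenvalue $\lambda_k$ of $\mathcal{L}_1|_{R_-\text{-even}}$ to $\mu = \lambda_k + b(1 + e^{-\mu\tau})$; at $\tau = 0$ this reads $\mu = \lambda_k + 2b$, which for $b$ sufficiently negative lies in the open left half-plane. On each $\mathcal{V}_n$ with $n \ne m$, an analogous calculation produces a characteristic equation whose control part carries the phase $e^{i(m-n)\zeta}$ together with the radial reflection $v(s) \leftrightarrow v(s_*-s)$; for all but finitely many $\zeta \in S^1$ (those making this reflection-rotation problem degenerate) sufficiently negative $b$ again pushes the spectrum into the open left half-plane, and uniformity in $n$ follows from the $-n^2/a^2$ term in $\Delta_n$ which makes $\mathcal{L}_1|_{\mathcal{V}_n}$ uniformly strongly negative for $|n|$ large, reducing the analysis to finitely many $n$. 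The same perturbation argument as in the proof of Theorem \ref{theorem-0} extends these conclusions from $\tau = 0$ to small $\tau \ge 0$ and, via Lemma \ref{lemma-upper-semicontinuous}, to a neighbourhood of $(\eta, \beta) = (0,0)$; the persistent zero eigenvalue is harmless because it is enforced by the gauge symmetry \eqref{gauge-symmetry}, which both the Ginzburg--Landau equation and the noninvasive control preserve, so $\Psi_1$ is locally asymptotically stable modulo its gauge orbit. The main obstacle, compared with Theorem \ref{theorem-0}, is the Sturm--Liouville analysis of the third paragraph: the control cannot help on the $R_-$-odd part, so the stability of $\psi_1$ there must be extracted purely from the oscillation-theoretic placement of $u_1$ inside the spectrum of $L_I$.
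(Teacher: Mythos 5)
Your proposal is correct and follows essentially the same route as the paper: reduction to the variational case, a Fourier and reflection-parity decomposition so that the reflection in the control acts as a scalar $\pm 1$, stabilization of the even resonant sector and of all non-resonant modes by sufficiently negative $b$ for all but finitely many $\zeta$, stability of the control-invisible odd resonant sector from the nodal placement of $u_1$ (the content of Lemma \ref{lemma-spectral-no-control-Lm}, which you re-derive via Sturm--Liouville oscillation theory instead of citing the shooting argument), and perturbation in $\tau$ and in $(\eta,\beta)$. The only cosmetic differences are that the paper works with the shifted modes $W = V e^{-im\varphi}$ split into real and imaginary parts rather than your coupled pairs $\mathcal{V}_n$, and that the $(\eta,\beta)$-persistence is established for the delayed semiflow on $C^0([-\tau,0],L^2(\mathcal{M},\mathbb{C}))$ explicitly rather than by invoking Lemma \ref{lemma-upper-semicontinuous} as stated.
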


We provide three remarks regarding Theorem \ref{theorem-0} and Theorem \ref{theorem-1}. 
\begin{itemize}
\item First, the finitely many exceptions of $\zeta \in S^1$ for stabilization are determined by the unstable dimension of the spiral waves; see the proof of Lemma \ref{lemma-pure-spatial-stabilization-0} and Lemma \ref{lemma-pure-spatial-stabilization-1}. Lower bound estimates of the unstable dimension have been investigated (see \cite{DaRong20, DaLa21} for instance), but in general the exact value of the unstable dimension remains unknown.
\item  Second, pure temporal controls (i.e., $\iota = +$ and $\zeta  = 0$ in \eqref{control-system}) cannot achieve stabilization, as we will prove in Lemma \ref{lemma-failure}. Hence space shifts play an indispensable role for stabilization. Failure of stabilization with pure time delays has also been documented for different models; see \cite{Sch16, Sch18, oddnumber}.
\item As a direct consequence, for the variational case $(\eta, \beta) = (0,0)$ we can selectively stabilize all the unstable vortex equilibria obtained in Lemma \ref{lem-instability-0} and also those with the nodal class $j = 1$ in Lemma \ref{lem-instability-1}, independently of the number of arms $m \in \mathbb{N}$.
\end{itemize}

Our stabilization results, Theorem \ref{theorem-0} and Theorem \ref{theorem-1}, are novel in the following four aspects.
\begin{itemize}
\item[(1)] For the first time, $m$-armed spiral wave solutions of the complex Ginzburg--Landau equation \eqref{complex-GLe} are successfully stabilized. Moreover, stabilization is achieved for an arbitrary number of arms $m \in \mathbb{N}$.
\item[(2)] We stabilize spiral waves \textit{selectively}, in the sense that only the \DIFdelbegin \DIFdel{target }\DIFdelend \DIFaddbegin \DIFadd{targeted }\DIFaddend spiral wave with the prescribed spatio-temporal symmetries is stabilized. In particular, depending on the symmetry of the underlying surface $\mathcal{M}$, we distinguish spiral waves by their nodal class $j \in \{0,1\}$ of the radial part, and then stabilize them.
\item[(3)] We can stabilize spiral waves along the \textit{global} bifurcation curves, in the sense that stabilization is achieved for an arbitrary bifurcation parameter $\lambda$ strictly larger than the relevant bifurcation values $\lambda_0^m, \lambda_1^m$. Consequently, spiral waves far away from the trivial equilibrium, which thus possess large amplitudes, can be stabilized. 
\item[(4)] We stabilize Ginzburg--Landau solutions with an \emph{inhomogeneous} amplitude. In contrast, the relevant literature on feedback stabilization in the Ginzburg--Landau equation considered explicit solutions with homogeneous amplitude; see \cite{MON04,POS07}. 
\end{itemize}

We indicate two directions of future research based on our stabilization results. First, regarding mathematical analysis, we can \DIFdelbegin \DIFdel{proceed with stabilizing }\DIFdelend \DIFaddbegin {\color{red}\DIFadd{investigate stabilization of}} \DIFaddend spiral waves within the nodal classes $j \ge 1$ in circular geometry and $j \ge 2$ in spherical geometry, respectively. The control term \eqref{control-term-explicit} already exhausts all the known symmetries of $m$-armed spiral waves. So to obtain further stabilization results, the main task is to first obtain more spatio-temporal symmetries of spiral waves than the $\mathbb{Z}_2$-radial-symmetry (see Lemma \ref{lemma-existence} (i)), and then to design more general symmetry-breaking control terms.  

Second, regarding scientific applications, we expect that numerical implementation and experimental realization of our stabilization results can be carried out. Spiral waves in various models have been investigated extensively in experiments, and spatially extended feedback methods are realized for example through illumination for the photosensitive Belousov--Zhabotinsky reaction \cite{KHE02, VAN00}, with the help of an electrocardiogram in cardiac tissue \cite{PAN00}, or by regulating the carbon monoxide partial pressure in catalytic carbon oxidation of platinum \cite{KIM01}.

\DIFdelbegin \DIFdel{Last, we emphasize that while our results and in particular the proof are based on symmetry arguments of the complex Ginzburg--Landau equation, they are by no means limited to this specific setting, }\DIFdelend \DIFaddbegin \DIFadd{\textcolor{red}{Last, we emphasize that the design of our feedback control terms relies on symmetry arguments alone. Hence it is by no means limited to the specific setting of the Ginzburg--Landau equation,} }\DIFaddend and we expect our control method to be widely applicable theoretically, numerically, and experimentally.

\section{Proof of selective feedback stabilization} \label{sec:feedback-stabilization}

In this section \DIFdelbegin \DIFdel{, }\DIFdelend we prove the main results Theorem \ref{theorem-0} and Theorem \ref{theorem-1}.
Our proof consists of four steps. For the first three steps we consider the variational case $(\eta, \beta) = (0,0)$ where $\Psi_j = \psi_j$ is a vortex equilibrium. First, we study the spectral structure of the linearization operator at $\psi_j$ without control. Second, we achieve stabilization by pure space shifts (i.e., $\tau = 0$ in \eqref{control-system}). 
Third, we show that such stabilization persists under sufficiently small time delays $0 < \tau \ll 1$. In the final fourth step, we complete the proof of Theorem \ref{theorem-0} and Theorem \ref{theorem-1} by ensuring that stabilization persists under sufficiently small parameters $0 \le \lvert\eta\rvert,\, \lvert\beta\rvert \ll 1$.

\subsection{Spectral structure without control}\label{subsec:no-control}

For the variational Ginzburg--Landau equation without control (i.e., $(\eta, \beta) = (0,0)$ and $b = 0$ in \eqref{control-system}), the local stability of an $m$-armed vortex equilibrium $\psi_j$ is determined by solutions of the following linear evolutionary equation (see \cite[Chapter 5]{He81}):
\begin{align} \label{linear-equation-no-control}
\partial_t V = \mathcal{L}_j[V] := \Delta_\mathcal{M} V + \lambda  \left( \left(1- 2 \,u_j^2 \right) V - u_j^2 \, e^{2 i m \varphi} \, \overline{V} \right).
\end{align}
To simplify the analysis, we apply the change of coordinates
\begin{equation} \label{shift-no-control}
W(t,s,\varphi) := V(t,s,\varphi) \, e^{-im\varphi}
\end{equation}
which shifts the index of the Fourier modes on the $\varphi$-variable. Then in polar coordinates \eqref{polar-coordinates} we see that \eqref{linear-equation-no-control} is equivalent to 
\begin{align} \label{linear-equation-no-control-W}
\partial_t W = \, & \Delta_{\mathcal{M}} W + \frac{2im}{a^2} \partial_\varphi W - \frac{m^2}{a^2} W + \lambda  \left( \left(1-2 u_j^2\right) W  - u_j^2 \, \overline{W} \right).
\end{align}

We sort out the real and imaginary parts of $W$ by setting $W = P + i Q$, where $P, Q$ are real-valued functions. Then \eqref{linear-equation-no-control-W} is equivalent to
\begin{align} \label{linear-equation-no-control-P}
\partial_t P = \, & \Delta_{\mathcal{M}} P - \frac{2m}{a^2} \partial_\varphi Q - \frac{m^2}{a^2} P + \lambda  \left(1-3 u_j^2\right)  P,
\\ \label{linear-equation-no-control-Q}
\partial_t Q = \, &\Delta_{\mathcal{M}} Q + \frac{2m}{a^2} \partial_\varphi P - \frac{m^2}{a^2} Q + \lambda \left(1- u_j^2\right)  Q.
\end{align} 

Since $\mathcal{L}_j$ defined in \eqref{linear-equation-no-control} is self-adjoint and has compact resolvent, the following Fourier decomposition holds: 
\begin{equation} \label{fourier-decomposition}
L^2(\mathcal{M}, \mathbb{C}) = \bigoplus_{n \in \mathbb{Z}} L_n^2(\mathbb{C}), 
\end{equation} 
where $L_n^2(\mathbb{C}) := \left\{ \psi \in L^2(\mathcal{M}, \mathbb{C}) : \psi(s, \varphi) = u(s) \, e^{in\varphi}, \, u(s) \in \mathbb{C} \right\}$. This allows us to substitute the following exponential Ansatz \DIFdelbegin \DIFdel{in }\DIFdelend \DIFaddbegin \DIFadd{on }\DIFaddend the $t$-variable and Fourier Ansatz \DIFdelbegin \DIFdel{in }\DIFdelend \DIFaddbegin \DIFadd{on }\DIFaddend the $\varphi$-variable into the system \eqref{linear-equation-no-control-P}--\eqref{linear-equation-no-control-Q}: 
\begin{align} 
P(t,s,\varphi) = e^{t \mu}\sum_{n \in \mathbb{Z}} P_n(s,\varphi), \quad 
Q(t,s,\varphi) =  e^{t\mu}\sum_{n \in \mathbb{Z}} Q_n(s,\varphi).
\end{align}
Here $\mu \in \mathbb{R}$ is an eigenvalue of $\mathcal{L}_j$ and $(P_n, Q_n) \in (L_n^2(\mathbb{C}))^2 := L_n^2(\mathbb{C}) \times L_n^2(\mathbb{C})$. Then the system \eqref{linear-equation-no-control-P}--\eqref{linear-equation-no-control-Q} is equivalent to countably many eigenvalue problems for $\mathcal{L}_j$ restricted to $(L_n^2(\mathbb{C}))^2$ and indexed by $n \in \mathbb{Z}$: 
\begin{align} \label{linear-equation-no-control-Pn}
\mu \, P_n = \, & \Delta_{n} P_n - \frac{2imn}{a^2} Q_n - \frac{m^2}{a^2} P_n + \lambda  \left(1-3 u_j^2\right)  P_n,
\\ \label{linear-equation-no-control-Qn}
\mu \, Q_n  = \, & \Delta_{n} Q_n + \frac{2im n}{a^2} P_n - \frac{m^2}{a^2} Q_n + \lambda  \left(1- u_j^2\right)  Q_n.
\end{align} 
Here $\Delta_n$ is the restriction of $\Delta_\mathcal{M}$ to $L_n^2(\mathbb{C})$. We denote by
\begin{equation} 
\mathcal{L}_{j, n} := \mathcal{L}_j \big\lvert_{(L_n^2(\mathbb{C}))^2}: (L_n^2(\mathbb{C}))^2 \rightarrow (L_n^2(\mathbb{C}))^2.
\end{equation}

The following spectral properties of $\mathcal{L}_{j,n}$ are inherited from $\mathcal{L}_j$.

\begin{lemma} \label{lemma-spectral-no-control} The spectrum $\sigma(\mathcal{L}_{j,n})$ consists of real eigenvalues, only. The principal eigenvalue \DIFdelbegin \DIFdel{$\mu_{j,n}^* := \max \left\{z : z \in \sigma \left(\mathcal{L}_{j,n} \right) \right\}$ }\DIFdelend \DIFaddbegin \DIFadd{$\mu_{j,n}^*$ \textcolor{red}{of $\mathcal{L}_{j,n}$} }\DIFaddend exists and satisfies
\begin{equation} \label{largest-constant}
\mu_{j,n}^* \le \mu_j^* \quad \mbox{for   } n \in \mathbb{Z},
\end{equation}
where $\mu_j^*$ is the principal eigenvalue of $\mathcal{L}_j$\DIFaddbegin \DIFadd{; \textcolor{red}{see \eqref{principal-eigenvalue-general}}}\DIFaddend . Moreover, for each fixed $j \in \mathbb{N}_0$ we have
\begin{equation} \label{finite-unstable-modes}
\lim_{\lvert n\rvert \rightarrow \infty}\mu_{j,n}^* = -\infty.
\end{equation}
Consequently, there is an $n_j \in \mathbb{N}_0$ such that $\mu_{j,n}^* < 0$ if $\lvert n \rvert \ge n_j$. 
\end{lemma}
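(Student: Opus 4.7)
The plan is to derive all three claims as consequences of the $\varphi$-Fourier decomposition combined with the self-adjointness and compact resolvent of $\mathcal{L}_j$. The preliminary shift $W = V\,e^{-im\varphi}$ carried out in \eqref{shift-no-control} is the key point: it removes the $\varphi$-dependence from the reaction coefficients in \eqref{linear-equation-no-control-W}, so that the orthogonal decomposition \eqref{fourier-decomposition} consists of subspaces left invariant by the shifted operator, and correspondingly $\sigma(\mathcal{L}_j) = \bigcup_{n \in \mathbb{Z}} \sigma(\mathcal{L}_{j,n})$.

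First I would verify that each block $\mathcal{L}_{j,n}$ in \eqref{linear-equation-no-control-Pn}--\eqref{linear-equation-no-control-Qn} is self-adjoint on $(L_n^2(\mathbb{C}))^2$. Its diagonal entries are real Sturm--Liouville operators in the arc-length variable $s$ carrying the boundary conditions inherited from $\mathrm{Dom}(\Delta_\mathcal{M})$, while its off-diagonal coefficients $\mp 2imn/a^2$ form a Hermitian pair and therefore contribute a real quadratic form. The one-dimensional elliptic structure on the bounded interval $[0,s_*]$, combined with the compact resolvent of $\mathcal{L}_j$ inherited by the invariant subspace, delivers a discrete real spectrum accumulating only at $-\infty$, so the principal eigenvalue $\mu_{j,n}^*$ exists. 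The bound \eqref{largest-constant} is then immediate, because every eigenvalue of $\mathcal{L}_{j,n}$ is an eigenvalue of $\mathcal{L}_j$ and thus is bounded above by $\mu_j^*$.

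For the asymptotic decay \eqref{finite-unstable-modes} I would argue abstractly rather than estimate a Rayleigh quotient. The compact resolvent of $\mathcal{L}_j$ forces $\sigma(\mathcal{L}_j) \cap (-M, \infty)$ to be finite with each eigenvalue of finite algebraic multiplicity, for every threshold $M > 0$. Via $\sigma(\mathcal{L}_j) = \bigcup_n \sigma(\mathcal{L}_{j,n})$ and the orthogonality of the blocks, this finiteness forces that at most finitely many indices $n$ can contribute any eigenvalue exceeding $-M$; hence $\mu_{j,n}^* \le -M$ for all but finitely many $n$. Letting $M \to \infty$ yields $\mu_{j,n}^* \to -\infty$ as $|n|\to\infty$, from which the existence of $n_j$ follows at once.

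The point requiring the most care is the interaction of the complex Fourier decomposition with the $\mathbb{R}$-linear conjugation term $-\lambda u_j^2\, \overline{W}$ in \eqref{linear-equation-no-control-W}, which \emph{a priori} couples the Fourier mode $n$ to the mode $-n$. Passing to the real/imaginary split $W = P + iQ$ of \eqref{linear-equation-no-control-P}--\eqref{linear-equation-no-control-Q} removes this obstruction because the resulting system has real $\varphi$-coefficients; the complex Fourier blocks $\mathcal{L}_{j,n}$ on pairs $(P_n, Q_n) \in (L_n^2(\mathbb{C}))^2$ then decouple cleanly, and the reality constraint $\overline{p_n} = p_{-n}$ needed to reconstruct the real-valued fields $P, Q$ is consistent with the fact that the $n$- and $(-n)$-blocks carry the same spectrum. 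Beyond this bookkeeping, the argument reduces to standard facts on self-adjoint operators with compact resolvent.
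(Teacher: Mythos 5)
Your argument is correct and follows essentially the same route as the paper: both deduce reality of the spectrum, the bound $\mu_{j,n}^*\le\mu_j^*$, and the decay $\mu_{j,n}^*\to-\infty$ from the fact that each $\mathcal{L}_{j,n}$ is the restriction of the self-adjoint, sectorial, compact-resolvent operator $\mathcal{L}_j$ to an orthogonal invariant Fourier block, so that only finitely many eigenvalues of $\mathcal{L}_j$ (counted with finite multiplicity) can lie in any right half-plane. You simply make explicit some bookkeeping the paper leaves implicit, such as the Hermitian structure of the off-diagonal coupling and the reality constraint linking the $n$- and $(-n)$-blocks.
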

\begin{proof}[Proof.]
Since $\mathcal{L}_{j,n}$ is the restriction of the uniformly elliptic operator $\mathcal{L}_j$, it is sectorial and has compact resolvent. Hence  $\sigma(\mathcal{L}_{j,n})$ consists of eigenvalues with finite multiplicity and the intersection between $\sigma(\mathcal{L}_{j,n})$ and any vertical strip in $\mathbb{C}$ is a finite set. Therefore, since for fixed $j \in \mathbb{N}_0$ the set $\{\mu_{j,n}^*:  n \in \mathbb{Z} \}$ is infinite, $\lim_{\lvert n\rvert \rightarrow \infty}\mu_{j,n}^* = -\infty$ holds. 
\end{proof}

The stability analysis in Subsection \ref{subsec;stabilization} requires more spectral information about $\mathcal{L}_{j, 0}$. By definition, $\mu \in \sigma(\mathcal{L}_{j,0})$ if and only if there exists a nonzero solution-pair $(P_0, Q_0) \in (L_0^2(\mathbb{C}))^2$ of the eigenvalue problem 
\begin{align} \label{linear-equation-no-control-P0}
\mu  \, P_0 = \, & \Delta_{0} P_0 -\frac{m^2}{a^2} P_0 + \lambda  \left(1-3 u_j^2\right)  P_0,
\\ \label{linear-equation-no-control-Q0}
\mu \, Q_0  = \, & \Delta_{0} Q_n - \frac{m^2}{a^2} Q_0 +  \lambda  \left(1- u_j^2\right)  Q_0.
\end{align}
Since the system \eqref{linear-equation-no-control-P0}--\eqref{linear-equation-no-control-Q0} decouples, the principal eigenvalue $\mu_{j,0}^*$ of $\mathcal{L}_{j,0}$ is strictly smaller than the principal eigenvalue of $\Delta_0 + \lambda \, (1-u_j^2)$ on $L_0^2(\mathbb{C})$, which is equivalent to \begin{equation} \label{linear-equation-on-Lm}
\Delta_m + \lambda \, (1- u_j^2): L_m^2(\mathbb{C}) \rightarrow L_m^2(\mathbb{C})
\end{equation}
as we shift the index of the Fourier modes back by $(P_0, Q_0) \mapsto (P_0 \, e^{im \varphi}, Q_0 \, e^{im \varphi})$. Notice that the gauge symmetry \eqref{gauge-symmetry} always yields zero as a \DIFdelbegin \textit{\DIFdel{trivial eigenvalue}} 
\DIFdelend \DIFaddbegin \DIFadd{trivial eigenvalue }\DIFaddend of \eqref{linear-equation-on-Lm}. 

The operator \eqref{linear-equation-on-Lm} is a singular Sturm--Liouville operator because $a(0) = 0$ (and also $a(s_*) = 0$ if $\partial \mathcal{M}$ is empty); see \eqref{restricted-operator}. However, it is singular merely because of polar coordinates \eqref{polar-coordinates}, and one expects that it has the same spectral structure as regular Sturm--Liouville operators, as we assert in the following lemma. 

\begin{lemma}[Spectral structure on $L_m^2(\mathbb{C})$]
\label{lemma-spectral-no-control-Lm}
The following statements hold:
\begin{itemize}
\item[\emph{(i)}] All eigenvalues of the self-adjoint operator \eqref{linear-equation-on-Lm} are simple. Moreover, the unstable dimension of \eqref{linear-equation-on-Lm} is $j \in \mathbb{N}_0$. Consequently, all nontrivial eigenvalues can be ordered as follows:
\begin{equation} \label{eigenvalue-order-Lm}
... <\mu_k^m <... <\mu_{j}^m < 0 < \mu_{j-1}^m < ... < \mu_0^m, \quad \lim_{k \rightarrow \infty} \mu^m_k = -\infty.
\end{equation}
\item[\emph{(ii)}] Suppose that $\partial \mathcal{M}$ is empty and the reflection symmetry \eqref{reflection-symmetry} holds. Let $y_k(s) \, e^{i m \varphi}$ be an eigenfunction of \eqref{linear-equation-on-Lm} associated with $\mu_k^m \in \mathbb{R}$. Then
\begin{equation} \label{eigenvalue-symmetry-Lm}
y_k(s\DIFaddbegin \DIFadd{_* - s}\DIFaddend ) = (-1)^k y_k(s\DIFdelbegin \DIFdel{_* - s}\DIFdelend ) \quad \mbox{for   }  k \in \mathbb{N}_0, \, s \in [0, s_*].
\end{equation}
\end{itemize}
\end{lemma}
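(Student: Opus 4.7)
The plan is to reduce the eigenvalue problem for the operator in \eqref{linear-equation-on-Lm} to a singular Sturm--Liouville problem in a single radial variable, extract the simplicity/oscillation structure from there, and then identify the trivial gauge eigenvalue by inspection.

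First, writing $\psi = u(s)\, e^{im\varphi}$ and using \eqref{restricted-operator}, the eigenvalue problem reduces to
\[
u''(s) + \frac{a'(s)}{a(s)}\, u'(s) - \frac{m^2}{a^2(s)}\, u(s) + \lambda \bigl(1 - u_j^2(s)\bigr)\, u(s) \;=\; \mu\, u(s), \qquad s \in (0, s_*),
\]
or, in formally self-adjoint form with weight $a(s)$,
\[
(a\,u')' + \left[\lambda\, a\,(1 - u_j^2) - \frac{m^2}{a}\right] u \;=\; \mu\, a\, u,
\]
subject to the Robin condition \eqref{robin-boundary} at $s=s_*$ when $\partial\mathcal{M}\neq\emptyset$, and to a singular endpoint condition otherwise. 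The problem is singular where $a$ vanishes: at $s=0$ always, and at $s=s_*$ in the spherical case. A Frobenius analysis of the dominant terms near each such endpoint (where $a(s) \sim |s - s_\bullet|$) gives indicial exponents $\pm m$; since $m \geq 1$, the $|s - s_\bullet|^{-m}$ branch is not $L^2$ against the weight $a$, so only the regular $|s - s_\bullet|^{m}$ branch is admissible. This places each singular endpoint in the limit-point case and yields a canonical self-adjoint realization with compact resolvent and a real, discrete spectrum accumulating only at $-\infty$ (consistent with Lemma \ref{lemma-spectral-no-control}).

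Next, I would invoke the (singular) Sturm--Liouville oscillation theorem: every eigenvalue is simple and the eigenfunction associated with the $(k+1)$-th largest eigenvalue has exactly $k$ simple zeros in $(0, s_*)$. To locate the trivial gauge eigenvalue $\mu = 0$, note that \eqref{vortex-equation} says $\Delta_m(u_j e^{im\varphi}) + \lambda(1 - u_j^2)\, u_j\, e^{im\varphi} = 0$, so $u_j$ itself is an eigenfunction of \eqref{linear-equation-on-Lm} with eigenvalue $0$. By Lemma \ref{lemma-existence}(ii) it has exactly $j$ simple interior zeros, hence $0$ is the $(j+1)$-th eigenvalue from the top. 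Re-labelling the remaining (nontrivial) eigenvalues as $\mu_k^m$ gives the ordering \eqref{eigenvalue-order-Lm} and pins the unstable dimension at exactly $j$, proving part (i).

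For part (ii), under $a(s) = a(s_*-s)$ the reflection $(Ru)(s) := u(s_*-s)$ commutes with the operator: $a$ is $R$-invariant by hypothesis and $u_j^2$ is $R$-invariant since $u_j(s) = (-1)^j u_j(s_*-s)$ by Lemma \ref{lemma-existence}(i). Because $R^2 = I$ and the eigenvalues are simple by part (i), each eigenfunction $y_k$ is either even or odd under $R$. An odd $y_k$ vanishes at $s_*/2$ (a simple zero by part (i)) and its remaining zeros pair up as $\{s, s_*-s\}$, giving an odd total. An even $y_k$ cannot vanish at $s_*/2$, since evenness forces $y_k'(s_*/2) = 0$, and combined with $y_k(s_*/2) = 0$ this would yield $y_k \equiv 0$ by ODE uniqueness; hence an even $y_k$ has an even total number of zeros. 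Because $y_k$ has exactly $k$ zeros, its parity under $R$ must be $(-1)^k$, which is precisely \eqref{eigenvalue-symmetry-Lm}.

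The main obstacle is making the singular Sturm--Liouville framework fully rigorous at the endpoints where $a$ vanishes---verifying the limit-point classification, selecting the correct self-adjoint realization compatible with \eqref{robin-boundary} when applicable, and transferring the oscillation/simplicity statements to this singular setting. Once this structural input is available, the identification of the trivial eigenvalue via \eqref{vortex-equation} and the parity/zero-counting argument in the reflection-symmetric case are essentially immediate.
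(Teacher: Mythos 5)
Your proposal is correct and follows essentially the same route as the paper: reduce to a radial singular Sturm--Liouville problem, use simplicity together with the zero-count ordering of eigenfunctions to place the trivial eigenvalue (whose eigenfunction is $u_j$, with exactly $j$ zeros by Lemma \ref{lemma-existence} (ii)) at position $j$ from the top, and deduce (ii) from reflection-equivariance, simplicity, and the parity of the number of zeros. The only difference is how the oscillation/simplicity structure at the singular endpoints is sourced: you invoke classical singular Sturm--Liouville theory after a Frobenius/limit-point analysis, whereas the paper imports exactly the same structural input from the monotone shooting-curve argument of \cite{DaLa21}, so the gap you flag is precisely the step the paper closes by citation.
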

\begin{proof}[Proof.]
Our proof is based on the shooting argument in \cite{DaLa21}, which has been used to prove the same spectral structure for another operator $\Delta_m + \lambda \, (1-3 u_j^2): L_m^2(\mathbb{C}) \rightarrow L_m^2(\mathbb{C})$ that differs from \eqref{linear-equation-on-Lm} only by a constant coefficient. Indeed, with the shooting argument we can obtain a monotonicity result of shooting curves, which is analogous to \cite[Lemma 3.5]{DaLa21} and thus ensures three properties explained below. 

First, the eigenvalue problem of \eqref{linear-equation-on-Lm} possesses at most one bounded nontrivial solution in $L_m^2(\mathbb{C})$. Hence all eigenvalues are simple due to the self-adjointness of \eqref{linear-equation-on-Lm}. 

Second, the unstable dimension of \eqref{linear-equation-on-Lm} is equal to the nodal class of the eigenfunction associated with the trivial eigenvalue $\mu = 0$. Observe that $\psi_j$ solves \eqref{vortex-equation} and thus is an eigenfunction of \eqref{linear-equation-on-Lm} associated with the trivial eigenvalue $\mu = 0$. Since the radial part $u_j(s)$ of $\psi_j(s, \varphi)$ possesses $j$ simple zeros on $(0, s_*)$ (see Lemma \ref{lemma-existence} (ii)), $j$ is the unstable dimension of \eqref{linear-equation-on-Lm}. As a result, the statement in (i) is proved. 

Third, $y_k(s)$ possesses exactly $k$ simple zeros on $(0, s_*)$. Observe that the eigenvalue problem of \eqref{linear-equation-on-Lm} is unchanged as we apply the new variable $s \mapsto s_* -s$, due to the $\mathbb{Z}_2$-radial-symmetry in Lemma \ref{lemma-existence} (i). Since all eigenvalues are simple by (i), either \DIFdelbegin \DIFdel{$y_k(s) = y_k(s_*-s)$ or $y_k(s) = - y_k(s_*-s)$ }\DIFdelend \DIFaddbegin \DIFadd{$y_k(s_* - s) = y_k(s)$ or $y_k(s_* - s) = - y_k(s)$ }\DIFaddend for $s \in [0, s_*]$. Since $y_k(s)$ possesses exactly $k$ simple zeros on $(0, s_*)$, $k \in \mathbb{N}_0$ is even if and only if $s = s_*/2$ is not a zero of $y_k(s)$, and thus if and only if \DIFdelbegin \DIFdel{$y_k(s) = y_k(s_*-s)$}\DIFdelend \DIFaddbegin \DIFadd{$y_k(s_* - s) = y_k(s)$}\DIFaddend . The proof is complete.
\end{proof}

\subsection{Spatio-temporal feedback stabilization} \label{subsec;stabilization}

We now consider the following variational Ginzburg--Landau equation with control:
\begin{equation} \label{control-system-variational}
\partial_t \Psi =  \Delta_{\mathcal{M}} \Psi + \lambda  \left(1-\lvert\Psi\rvert^2 \right) \Psi + b \, \left(\Psi - h \, \Psi(t-\tau, R_{\iota}(s), \varphi - \zeta) \right).
\end{equation}

\subsubsection{Nodal class: \texorpdfstring{$j = 0$}{Lg}.} \label{subsec;circular}

For this class we choose $h = e^{im \zeta}$ and $R_+(s) = s$ so that the control term in \eqref{control-system-variational} is noninvasive; see also \eqref{noninvasive-variational}. The local stability of $\psi_0$ under the dynamics of the control system \eqref{control-system-variational} is determined by solutions of the following linear partial delay differential equation (see \cite[Section 4.4, Theorem 4.1]{Wu96}):
\begin{align} \label{linear-equation-control-0-V}
\begin{split}
\partial_t V & =  \Delta_\mathcal{M} V + \lambda  \left( \left(1- 2 u_0^2\right) V - u_0^2 \, e^{2 i m \varphi} \, \overline{V} \right) 
\\&
\quad + b  \left(V - e^{im\zeta} \, V(t-\tau, s, \varphi - \zeta)\right).
\end{split}
\end{align}

We aim to show that the spectrum of the linearization operator of \eqref{control-system-variational} at $\psi_0$, i.e., the right-hand side of \eqref{linear-equation-control-0-V}, consists of eigenvalues only. We then derive the characteristic equations for those eigenvalues, where $\tau \ge 0$ and $\zeta \in S^1$ act as parameters.

We shift the index of the Fourier modes by $W(t,s,\varphi) := V(t,s,\varphi) \, e^{-im\varphi}$ and set $W = P +i Q$ where $P, Q$ are real-valued functions. Then \eqref{linear-equation-control-0-V} is equivalent to 
\begin{align} \label{linear-equation-control-0-P}
\begin{split}
\partial_t P &=  \Delta_{\mathcal{M}} P - \frac{2m}{a^2} \partial_\varphi Q - \frac{m^2}{a^2} P + \lambda  \left(1-3 u_0^2\right)  P 
\\&
\quad + b  \left(P - P(t-\tau, s, \varphi - \zeta)\right),
\end{split}
\\ \label{linear-equation-control-0-Q}
\begin{split}
\partial_t Q & = \Delta_{\mathcal{M}} Q + \frac{2m}{a^2} \partial_\varphi P - \frac{m^2}{a^2} Q + \lambda \left(1- u_0^2\right) Q
\\&
\quad + b  \left(Q - Q(t-\tau, s, \varphi - \zeta)\right).
\end{split}
\end{align} 
Due to \cite[Section 3.1, Theorem 1.6]{Wu96} and the Fourier decomposition \eqref{fourier-decomposition} we can substitute the Ansatz 
\begin{align} \label{fourier-ansatz}
P(t,s,\varphi) = e^{t (\mu+i \nu)}\sum_{n \in \mathbb{Z}} P_n(s,\varphi), \quad 
Q(t,s,\varphi) =  e^{t (\mu + i \nu)}\sum_{n \in \mathbb{Z}} Q_n(s,\varphi),
\end{align}
into \eqref{linear-equation-control-0-P}--\eqref{linear-equation-control-0-Q} \DIFaddbegin \DIFadd{\textcolor{red}{for $P_n, Q_n \in L_n^2(\mathbb{C})$}}\DIFaddend , which yields countably many eigenvalue problems on $(L_n^2(\mathbb{C}))^2$ indexed by $n \in \mathbb{Z}$, with the eigenvalue $\mu + i\nu \in \mathbb{C}$ for $\mu, \nu \in \mathbb{R}$: 
\begin{align} \label{linear-equation-control-0-Pn}
\begin{split}
(\mu + i \nu) \, P_n & = \Delta_{n} P_n - \frac{2imn}{a^2} Q_n - \frac{m^2}{a^2} P_n + \lambda  \left(1-3 u_0^2\right)  P_n 
\\&
\quad + b\, \left(1 - e^{-\tau \mu - i (\tau \nu + n \zeta)}\right) P_n,
\end{split}
\\ \label{linear-equation-control-0-Qn}
\begin{split}
(\mu + i \nu) \, Q_n  & =  \Delta_{n} Q_n + \frac{2im n}{a^2} P_n - \frac{m^2}{a^2} Q_n + \lambda  \left(1- u_0^2\right)  Q_n \\&
\quad + b\, \left(1 - e^{-\tau \mu - i (\tau \nu + n \zeta)}\right) Q_n.
\end{split}
\end{align} 
Note that, equivalently, $\mu + i \nu \in \mathbb{C}$ is an eigenvalue of the infinitesimal generator associated with the partial delay differential equations \eqref{linear-equation-control-0-P}--\eqref{linear-equation-control-0-Q}; see \cite[Chapter 3]{Wu96}. Hence $\psi_0$ is stabilized, i.e., it becomes locally \DIFdelbegin \DIFdel{asymptotically }\DIFdelend \DIFaddbegin \DIFadd{\textcolor{red}{exponentially} }\DIFaddend stable under the dynamics of the control system \eqref{control-system-variational}, if all nontrivial eigenvalues $\mu + i\nu \in \mathbb{C}$ in \eqref{linear-equation-control-0-Pn}--\eqref{linear-equation-control-0-Qn} satisfy $\mu < 0$ for each $n \in \mathbb{Z}$ and also the trivial eigenvalue $\mu + i \nu = 0$ triggered by the gauge symmetry \eqref{gauge-symmetry} is algebraically simple.

\begin{lemma}[Characteristic equations] \label{lemma-characteristic-0}
Let $\mathcal{L}_{0,n}$ be the operator defined as the right-hand side of \eqref{linear-equation-control-0-Pn}--\eqref{linear-equation-control-0-Qn} with $b = 0$. Then $\mu + i \nu \in \mathbb{C}$ is an eigenvalue in \eqref{linear-equation-control-0-Pn}--\eqref{linear-equation-control-0-Qn} if and only if $\mu, \nu \in \mathbb{R}$ satisfy the following \textit{characteristic equations}:
\begin{align} \label{char-equation-0-mu}
\mu  & =  \hat{\mu} + b \, \left( 1-  e^{-\tau \mu} \cos(\tau \nu + n \zeta)\right),
\\ \label{char-equation-0-nu}
\nu   & = \, b \, e^{-\tau \mu} \sin(\tau \nu + n \zeta),
\end{align} 
for some $\hat{\mu} \in \sigma(\mathcal{L}_{0,n})$.
\end{lemma}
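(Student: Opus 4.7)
The plan is to recognize that the feedback term in \eqref{linear-equation-control-0-Pn}--\eqref{linear-equation-control-0-Qn} enters only as a \emph{complex scalar} multiplier on the pair $(P_n, Q_n)$. Setting
\[
c(\mu, \nu) := b\bigl(1 - e^{-\tau \mu - i(\tau \nu + n \zeta)}\bigr),
\]
the full eigenvalue problem can be rewritten as
\[
\bigl(\mu + i\nu - c(\mu, \nu)\bigr) \begin{pmatrix} P_n \\ Q_n \end{pmatrix} = \mathcal{L}_{0,n} \begin{pmatrix} P_n \\ Q_n \end{pmatrix},
\]
so a nontrivial solution $(P_n, Q_n)$ exists if and only if the complex number $\mu + i\nu - c(\mu, \nu)$ lies in the spectrum $\sigma(\mathcal{L}_{0,n})$.

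Second, I would invoke Lemma \ref{lemma-spectral-no-control}, which ensures $\sigma(\mathcal{L}_{0,n}) \subset \mathbb{R}$. This forces $\mu + i \nu - c(\mu, \nu) = \hat{\mu}$ for some real $\hat{\mu} \in \sigma(\mathcal{L}_{0,n})$. Expanding
\[
e^{-\tau \mu - i(\tau \nu + n \zeta)} = e^{-\tau \mu}\bigl[\cos(\tau \nu + n \zeta) - i \sin(\tau \nu + n \zeta)\bigr]
\]
and separating real and imaginary parts of the identity $\mu + i\nu - c(\mu, \nu) = \hat{\mu}$ produces exactly \eqref{char-equation-0-mu} and \eqref{char-equation-0-nu}. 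The conceptual key is the reality of $\sigma(\mathcal{L}_{0,n})$: without it, one would only obtain a single complex identity instead of two independent real ones.

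The converse direction is simply the argument run backwards. Given real $\mu, \nu$ and $\hat{\mu} \in \sigma(\mathcal{L}_{0,n})$ satisfying \eqref{char-equation-0-mu}--\eqref{char-equation-0-nu}, I would choose an eigenfunction $(P_n, Q_n)$ of $\mathcal{L}_{0,n}$ with eigenvalue $\hat{\mu}$, recombine the two real equations into the single complex identity $\mu + i\nu = \hat{\mu} + c(\mu, \nu)$, multiply through by $(P_n, Q_n)$, and verify that the resulting system is precisely \eqref{linear-equation-control-0-Pn}--\eqref{linear-equation-control-0-Qn} with eigenvalue $\mu + i\nu$.

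I do not anticipate a serious obstacle: the proof is essentially an algebraic manipulation standing on top of Lemma \ref{lemma-spectral-no-control}. The only point requiring care is the correct identification of the delay-shift-rotation operator as a multiplication by the phase factor $e^{-\tau \mu - i(\tau \nu + n \zeta)}$ after substituting the Ansatz \eqref{fourier-ansatz}; once that is in place, the separation into real and imaginary parts is automatic because the feedback couples neither different Fourier modes nor the $P_n$ and $Q_n$ components.
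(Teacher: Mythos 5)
Your proposal is correct and follows essentially the same route as the paper: both arguments rest on the observation that the feedback term acts as the complex scalar $b\,(1 - e^{-\tau\mu - i(\tau\nu + n\zeta)})$ times the identity on $(L_n^2(\mathbb{C}))^2$, so the controlled eigenvalue problem reduces to membership of $\mu + i\nu - c(\mu,\nu)$ in $\sigma(\mathcal{L}_{0,n})$, and the reality of that spectrum (Lemma \ref{lemma-spectral-no-control}) then splits the single complex identity into the two real characteristic equations. Your explicit appeal to the reality of $\sigma(\mathcal{L}_{0,n})$ makes a step precise that the paper leaves implicit, but the argument is the same.
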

\begin{proof}[Proof.]
The right hand side of  \eqref{linear-equation-control-0-Pn}--\eqref{linear-equation-control-0-Qn} is the sum of the operators $\mathcal{L}_{0, n}$ and $b  \, (1 - e^{-\tau \mu - i (\tau \nu + n \zeta)}) \, \mathcal{I}_n$, where $\mathcal{I}_n :(L_n^2(\mathbb{C}))^2 \rightarrow (L_n^2(\mathbb{C}))^2$ is the identity operator. Since $\mathcal{L}_{0, n}$ and $b \, (1 - e^{-\tau \mu - i (\tau \nu + n \zeta)}) \, \mathcal{I}_n$ commute, $(P_n, Q_n)$ solves the eigenvalue problem \eqref{linear-equation-control-0-Pn}--\eqref{linear-equation-control-0-Qn} with $\mu + i \nu \in \mathbb{C}$ if and only if 
$(P_n, Q_n)$ is an eigenfunction of $\mathcal{L}_{0,n}$ associated with an eigenvalue $\hat{\mu} \in \sigma(\mathcal{L}_{0,n})$ and $\mu, \nu \in \mathbb{R}$ satisfy \eqref{char-equation-0-mu}--\eqref{char-equation-0-nu}.
\end{proof}

We next show that control with pure time delays (i.e., $\zeta  = 0$ in \eqref{control-system-variational}) never achieves stabilization. Hence space shifts play an indispensable role for stabilization. 

\DIFaddbegin 


\DIFaddend 
\begin{lemma}[Failure of stabilization by control with pure time delays] \label{lemma-failure} Let $\psi_0(s, \varphi) = u_0(s) \, e^{im\varphi}$ be an unstable $m$-armed vortex equilibrium obtained in Lemma \ref{lemma-existence}. Then $\psi_0$ \DIFdelbegin \DIFdel{remains an unstable }\DIFdelend \DIFaddbegin \DIFadd{is not a locally exponentially stable }\DIFaddend equilibrium of the following control system:
\begin{equation} \label{control-system-no-space}
\partial_t \Psi = \Delta_{\mathcal{M}} \Psi + \lambda  \left(1-\lvert \Psi \rvert^2 \right)  \Psi + b \left( \Psi - \Psi(t-\tau, s, \varphi) \right)
\end{equation}
for all $b \in \mathbb{R}$ and $\tau \ge 0 $. 
\end{lemma}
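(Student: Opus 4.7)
The plan is to apply the characteristic equations from Lemma~\ref{lemma-characteristic-0} specialized to $\zeta = 0$. In that regime they read
\begin{align*}
\mu &= \hat{\mu} + b\,\bigl(1 - e^{-\tau\mu}\cos(\tau\nu)\bigr), \\
\nu &= b\, e^{-\tau\mu}\sin(\tau\nu),
\end{align*}
with $\hat{\mu} \in \sigma(\mathcal{L}_{0,n})$ for some $n \in \mathbb{Z}$. The decisive observation is that the Ansatz $\nu = 0$ solves the second equation identically, so it suffices to produce a positive real root of the scalar equation $\mu = \hat{\mu} + b\,(1 - e^{-\tau\mu})$ for a single positive $\hat{\mu}$ in the uncontrolled spectrum.

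Such a $\hat{\mu}$ is available because $\psi_0$ is unstable. Lemma~\ref{lemma-spectral-no-control} together with the Fourier decomposition \eqref{fourier-decomposition} gives $\mu_0^* = \max_{n \in \mathbb{Z}} \mu_{0,n}^*$ with the maximum attained at some $n^* \in \mathbb{Z}$; instability forces $\mu_0^* > 0$, so I fix $\hat{\mu} := \mu_{0,n^*}^* > 0$. With this choice I then study
\begin{equation*}
f(\mu) := \mu - \hat{\mu} - b\,\bigl(1 - e^{-\tau\mu}\bigr).
\end{equation*}
For $\tau > 0$, one checks that $f$ is continuous on $[0,\infty)$, with $f(0) = -\hat{\mu} < 0$, while $|b\,(1 - e^{-\tau\mu})| \le |b|$ uniformly in $\mu \ge 0$, so $f(\mu) \ge \mu - \hat{\mu} - |b| \to +\infty$. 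The intermediate value theorem therefore delivers some $\mu^* > 0$ with $f(\mu^*) = 0$, and the pair $(\mu^*, 0)$ is an eigenvalue of the infinitesimal generator linearized at $\psi_0$. The degenerate case $\tau = 0$ is immediate: the characteristic equation collapses to $\mu = \hat{\mu} > 0$. This yields a positive real eigenvalue for every $b \in \mathbb{R}$ and every $\tau \ge 0$, so $\psi_0$ stays unstable under \eqref{control-system-no-space}.

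I do not anticipate a substantive obstacle: this is the PDE analogue of the classical Pyragas-type obstruction of \cite{Sch16, Sch18, oddnumber}, and the algebra is driven entirely by the fact that, without the phase $n\zeta$ introduced by a nontrivial space shift, the characteristic equation does not couple real and imaginary parts and cannot shift the dominant real unstable eigenvalue into the left half-plane. The one bookkeeping point that must be handled carefully is to record that $\mu_0^*$ actually occurs in $\sigma(\mathcal{L}_{0,n^*})$ for a specific $n^*$, so that Lemma~\ref{lemma-characteristic-0} is applied with the correct Fourier index and produces a \emph{bona fide} eigenvalue of the controlled linearization, not merely a formal solution of the characteristic relation.
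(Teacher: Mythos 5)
Your proposal is correct and follows essentially the same route as the paper: both reduce to the characteristic equations of Lemma \ref{lemma-characteristic-0} with $\zeta = 0$ and $\nu = 0$, take $\hat{\mu} = \mu_0^* > 0$, and apply the intermediate value theorem to the resulting scalar equation to produce a positive real eigenvalue for every $b \in \mathbb{R}$ and $\tau \ge 0$. Your extra bookkeeping (locating $\mu_0^*$ in $\sigma(\mathcal{L}_{0,n^*})$ for a concrete Fourier index $n^*$, and treating $\tau = 0$ separately) is a harmless refinement of the paper's argument, not a different method.
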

\begin{proof}[Proof.]
Since $\psi_0$ is an unstable solution of \eqref{control-system-no-space} with $b = 0$, its associated principal eigenvalue $\mu_0^*$ is \DIFdelbegin \DIFdel{positive}\DIFdelend \DIFaddbegin \DIFadd{\textcolor{red}{nonnegative}}\DIFaddend ; see Lemma \ref{lemma-spectral-no-control}.
If $\tilde{\mu} \in \mathbb{R}$ is a zero of the function
\begin{equation}
    J(\mu) := \mu - \mu_0^* + b \left(1 -e^{-\tau \mu}\right),
\end{equation} 
then \eqref{char-equation-0-mu}--\eqref{char-equation-0-nu} is satisfied with $\mu = \tilde{\mu}, \nu = 0, \hat{\mu} = \mu_0^*$, and $\zeta = 0$. So in other words, if $\tilde{\mu} \in \mathbb{R}$ is a zero of $J$, then $\tilde{\mu}$ is an eigenvalue in 
\eqref{linear-equation-control-0-Pn}--\eqref{linear-equation-control-0-Qn}. 

\DIFdelbegin \DIFdel{Since $J(0) = - \mu_0^* < 0$ and $\lim_{\mu \rightarrow \infty} J(\mu) = \infty$ due to $\tau \ge 0$, the continuity of $J$ yields a $\tilde{\mu} > 0$ such that $J(\tilde{\mu}) = 0$. Hence $\tilde{\mu} > 0$ is an eigenvalue in }
\DIFdel{--}
\DIFdel{and so $\psi_0$ is still an unstable solution of the control system }
\DIFdel{.
}\DIFdelend \DIFaddbegin \DIFadd{\textcolor{red}{If $\mu_0^\ast > 0$, then $J(0) = - \mu_0^* < 0$. Since $\lim_{\mu \rightarrow \infty} J(\mu) = \infty$, the continuity of $J$ yields a $\tilde{\mu} > 0$ such that $J(\tilde{\mu}) = 0$. Hence $\tilde{\mu} > 0$ is an eigenvalue in \eqref{linear-equation-control-0-Pn}--\eqref{linear-equation-control-0-Qn}. If $\mu_0^\ast =0$, then also $J(0) = 0$ and $\tilde{\mu} = 0$ is an eigenvalue in \eqref{linear-equation-control-0-Pn}--\eqref{linear-equation-control-0-Qn}. So in both cases, \eqref{linear-equation-control-0-Pn}--\eqref{linear-equation-control-0-Qn} has an eigenvalue $\tilde{\mu} \geq 0$ and hence $\psi_0$ \textcolor{red}{is not a locally exponentially stable} solution of the control system \eqref{control-system-no-space}.
}
}\DIFaddend \end{proof}

\begin{lemma}[Selective stabilization by pure space shifts for $j = 0$] \label{lemma-pure-spatial-stabilization-0} 
Fix $m \in \mathbb{N}$, $\lambda > \lambda_0^m$, and let 
$\psi_0(s, \varphi) = u_0(s) \, e^{im\varphi}$ be the $m$-armed vortex equilibrium obtained in Lemma \ref{lemma-existence}. Then for all but finitely many choices of $\zeta \in S^1$, there exists a constant $\tilde{b} = \tilde{b}(\zeta) < 0$ such that $\psi_0$ becomes a locally \DIFdelbegin \DIFdel{asymptotically }\DIFdelend \DIFaddbegin \DIFadd{\textcolor{red}{exponentially} }\DIFaddend stable equilibrium of the control system
\begin{equation} \label{control-system-variational-0-no-tau}
\begin{split}
\partial_t \Psi & = \Delta_{\mathcal{M}} \Psi + \lambda  \left(1-\lvert\Psi\rvert^2 \right) \Psi + b \, \left(\Psi - e^{i m \zeta} \, \Psi(t, s, \varphi - \zeta) \right)
\end{split}
\end{equation}
for all $b \le \tilde{b}$.
\end{lemma}
\begin{proof}[Proof.]
Consider the \textit{nonresonant cases} $n \neq 0$. In the equation \eqref{char-equation-0-mu}, by the inequality \eqref{largest-constant} we have 
\begin{equation} \label{inequality-0-case-1}
\mu \le \mu_0^* + b \left( 1 - \cos( n \zeta) \right).
\end{equation}
Since we consider $b \le 0$, it holds that $b\left( 1 - \cos( n \zeta) \right) \le 0$, and thus the control term does not introduce any additional instability. Therefore, by Lemma \ref{lemma-spectral-no-control} we only need to stabilize the unstable and center eigenspaces of $\mathcal{L}_{0, n}$ for those $n \neq 0$ with $-n_0\le n \le n_0$. For each such $n \in \mathbb{Z}$ the relation $1 - \cos(n\zeta) > 0$, or equivalently, 
\begin{equation} 
 n \zeta \not\equiv 0 \quad \mbox{(mod   $2\pi$)} 
\end{equation}
has all but finitely many solutions $\zeta \in S^1$. Hence $1 - \cos(n\zeta) > 0$ for $n \neq 0$ with $-n_0 \le n \le n_0$ holds for all but finitely many $\zeta \in S^1$. As we fix one such $\zeta \in S^1$, since $\mu_0^*$ is fixed, there exists a $\tilde{b} = \tilde{b}(\zeta) < 0$ such that $\mu < 0$ in \eqref{inequality-0-case-1} holds for $b \le \tilde{b}$ and $n \in \mathbb{Z} \setminus \{0\}$.

In the \textit{resonant case} $n = 0$ the control term vanishes. It suffices to consider the operator \eqref{linear-equation-on-Lm} with $j = 0$, and Lemma \ref{lemma-spectral-no-control-Lm} (i) implies that $\psi_0$ is already locally \DIFdelbegin \DIFdel{asymptotically }\DIFdelend \DIFaddbegin \DIFadd{\textcolor{red}{exponentially} }\DIFaddend stable in $L_m^2(\mathbb{C})$. The proof is complete. 
\end{proof}

\begin{lemma}[Persistence of stabilization under small time delays for $j = 0$] \label{lemma-time-delay-perturbation-0} Consider the same setting and choices of $\zeta \in S^1$ and $\tilde{b} = \tilde{b}(\zeta) < 0$ as in Lemma \ref{lemma-pure-spatial-stabilization-0}. Then each $b \le \tilde{b}$ admits a constant $\tilde{\tau} = \tilde{\tau}(\zeta, b) > 0$ for which $\psi_0$ becomes a locally \DIFdelbegin \DIFdel{asymptotically }\DIFdelend \DIFaddbegin \DIFadd{\textcolor{red}{exponentially} }\DIFaddend stable equilibrium of the control system
\begin{equation} \label{control-system-variational-0}
\begin{split}
\partial_t \Psi & = \Delta_{\mathcal{M}} \Psi + \lambda \left(1-\lvert\Psi\rvert^2 \right) \Psi + b \, \left(\Psi - e^{i m \zeta} \, \Psi(t-\tau, s, \varphi - \zeta) \right)
\end{split}
\end{equation}
for all $\tau \in [0, \tilde{\tau})$. 
\end{lemma}
\begin{proof}[Proof.]
As a preparation for the proof, when $\tau = 0$, for each choice of $\zeta \in S^1 $ and $b \leq \tilde{b} < 0$ that achieves stabilization in Lemma \ref{lemma-pure-spatial-stabilization-0}, there exists a $\delta > 0$ such that every nontrivial eigenvalue $\mu + i \nu \in \mathbb{C}$ in the characteristic equations \eqref{char-equation-0-mu}--\eqref{char-equation-0-nu} satisfies $\mu < -\delta < 0$. The trivial eigenvalue $\mu + i \nu = 0$ in \eqref{char-equation-0-mu}--\eqref{char-equation-0-nu} is associated with the eigenfunction belonging to $L_m^2(\mathbb{C})$, and so it is algebraically simple by Lemma \ref{lemma-spectral-no-control-Lm} (i). 

Now consider the case $\tau > 0$ and fix a choice of $\zeta \in S^1 $ and $b \leq \tilde{b} < 0$ as in Lemma \ref{lemma-pure-spatial-stabilization-0}. We prove that there exists a constant $\tilde{\tau} = \tilde{\tau}(\zeta, b) > 0$ such that all nontrivial solutions $\mu + i \nu \in \mathbb{C}$ of \eqref{char-equation-0-mu}--\eqref{char-equation-0-nu} with $\tau \in [0, \tilde{\tau})$ lie in the left-half plane $\{z \in \mathbb{C}: \mathrm{Re}(z) < -\delta\}$. 

To that end, we first prove that there exists a $\underline{\tau} \geq 0$ such that if $\hat{\mu} \in \sigma(\mathcal{L}_{0, n})$ satisfies $\hat{\mu} \leq -2 \delta$ and $\tau \in [0, \underline{\tau})$, then any solution $\mu + i \nu \in \mathbb{C}$ of \eqref{char-equation-0-mu}--\eqref{char-equation-0-nu} satisfies $\mu < - \delta$. Indeed, suppose by contradiction that there would exist a $\hat{\mu} \in \sigma(\mathcal{L}_{0, n})$ with $\hat{\mu} \leq -2 \delta$ and sequences $(\mu_\ell + i \nu_\ell)_{\ell \in \mathbb{N}}$ and $(\tau_\ell)_{\ell \in \mathbb{N}}$ with the following three properties:
\begin{itemize}
    \item $\mu_\ell + i \nu_\ell \in \mathbb{C}$ is a solution of \eqref{char-equation-0-mu}--\eqref{char-equation-0-nu} with $\tau = \tau_\ell > 0$;
    \item $\lim_{\ell \to \infty} \tau_\ell = 0$;
    \item $\mu_\ell \geq - \delta$ for all $\ell \in \mathbb{N}$.
\end{itemize}
Then squaring the characteristic equations \eqref{char-equation-0-mu}--\eqref{char-equation-0-nu} yields 
\begin{equation} \label{square-relation}
(\lvert b\rvert + \mu_\ell - \hat{\mu})^2 + \nu_\ell^2 = b^2 \, e^{-2 \tau_\ell \mu_\ell} \le b^2 \, e^{2 \tau_\ell \delta}.
\end{equation}
Since $b  = - \lvert b \rvert$ and we have assumed $\hat{\mu} \le -2\delta$ and $\mu_\ell \ge -\delta$, it holds that
\begin{align} \label{lower-bound}
\tau_\ell \ge \frac{1}{2\delta} \log \left( \left( 1 + \frac{\mu_\ell}{\lvert b\rvert} - \frac{\hat{\mu}}{\lvert b\rvert} \right)^2 + \frac{\nu_\ell^2}{b^2} \right)
\ge \frac{1}{\delta}\log \left( 1 + \frac{\delta}{\lvert b\rvert}\right)
> 0.
\end{align}
But \eqref{lower-bound} contradicts $\lim_{\ell \rightarrow \infty} \tau_\ell = 0$, since the positive lower bound 
\begin{equation}
\underline{\tau} := \frac{1}{\delta}\log \left( 1 + \frac{\delta}{\lvert b\rvert}\right)
\end{equation} 
of $\tau_\ell$ is independent of $\ell \in \mathbb{N}$ and $\hat{\mu} \le -2\delta$. We conclude that if $\hat{\mu} \in \sigma(\mathcal{L}_{0, n})$ satisfies $\hat{\mu} \le - 2 \delta$ and $\tau \in [0, \underline{\tau})$, then all nontrivial eigenvalues $\mu + i \nu \in \mathbb{C}$ in \eqref{char-equation-0-mu}--\eqref{char-equation-0-nu} satisfy $\mu < -\delta$.

Since the operator $\mathcal{L}_0$ in \eqref{linear-equation-no-control} is sectorial, only finitely many eigenvalues 
\begin{equation}
\{0\} \cup \{\hat{\mu}_q \neq 0: q = 1,2,...,\tilde{q}\} 
\end{equation}
of $\mathcal{L}_0$ lie in the right-half plane $\{z \in \mathbb{C}: \mathrm{Re}(z) > -2\delta\}$. Since the trivial eigenvalue $\mu + i \nu = 0$ in \eqref{char-equation-0-mu}--\eqref{char-equation-0-nu} with $\tau = 0$ is algebraically simple, there exists a $\tau_0 >0$ such that $\mu + i \nu = 0$ in \eqref{char-equation-0-mu}--\eqref{char-equation-0-nu} is still algebraically simple for $\tau \in [0, \tau_0)$. On the other hand, the finitely many nontrivial eigenvalues $\mu + i \nu \in \mathbb{C}$ in \eqref{char-equation-0-mu}--\eqref{char-equation-0-nu} with $\hat{\mu} = \hat{\mu}_q$ and $\tau = 0$ satisfy $\mu < -\delta$, due to stabilization by pure space shifts in Lemma \ref{lemma-pure-spatial-stabilization-0}. 
Since the eigenspace associated with these finitely many eigenvalues is finite-dimensional, each eigenvalue $\mu + i \nu \in \mathbb{C}$ in \eqref{char-equation-0-mu}--\eqref{char-equation-0-nu} with $\hat{\mu} = \hat{\mu}_q$ depends upper-semicontinuously on $\tau \ge 0$; see \cite[Chapter 3, Remark 3.3]{Ka95} or \cite[Theorem 4.4]{Sm11}. As a result, there exists a $\tau_q > 0$ such that all nontrivial eigenvalues $\mu + i \nu \in \mathbb{C}$ in \eqref{char-equation-0-mu}--\eqref{char-equation-0-nu} with $\hat{\mu} = \hat{\mu}_q$ and $\tau \in [0, \tau_q)$ satisfy $\mu < -\delta$. We complete the proof by defining $\tilde{\tau} := \min\{\underline{\tau}, \tau_0, \tau_1, ...,\tau_{\tilde{q}}\} > 0$.
\end{proof}

\begin{proof}[\textbf{Proof of Theorem \ref{theorem-0}}.] 
It remains to prove that the spatio-temporal stabilization in Lemma \ref{lemma-time-delay-perturbation-0} persists under sufficiently small parameters $0 \le \lvert\eta\rvert,\,\lvert\beta\rvert \ll 1$ in the control system \eqref{control-system-0}, as we keep the choices $\zeta \in S^1$, $b \le \tilde{b} < 0$, and $\tau \in [0, \tilde{\tau})$ as in Lemma \ref{lemma-time-delay-perturbation-0}. Such a persistence result on parameters $(\eta, \beta)$ is similar to Lemma \ref{lemma-upper-semicontinuous}, but here we prove it for the control system with a time delay $\tau > 0$.

The local stability of the selected spiral wave $\Psi_0(t,s,\varphi \,\lvert\, \eta, \beta) = e^{-i \Omega(\eta, \beta)t} \, \psi_0(s,\varphi \,\lvert\,\eta,\beta)$ under the dynamics of \eqref{control-system-0} is determined by solutions of the following linear partial delay differential equation (see \cite[Section 4.4, Theorem 4.1]{Wu96}):
\begin{equation} \label{linearization-equation-general-0}
\partial_t V = \mathcal{L}_0(\eta, \beta)[V] + b  \left(V - e^{i(-\Omega(\eta, \beta) \tau + m\zeta)} \, V(t-\tau, s, \varphi - \zeta)\right),
\end{equation}
where $\mathcal{L}_0(\eta, \beta)$ denotes the linearization operator without control; see \eqref{linearization-general}. Tuning $(\eta, \beta) \in \mathbb{R}^2$ away from $(0,0)$ yields two kinds of additional terms in \eqref{linearization-equation-general-0}: $\mathcal{L}_0(\eta, \beta) - \mathcal{L}_0(0,0)$ and the multiplicative constant $e^{-i \Omega(\eta, \beta) \tau}$; compare \eqref{linearization-equation-general-0} with \eqref{linear-equation-control-0-V}. Since $\tau \in [0, \tilde{\tau})$ is a fixed discrete time delay and thus the additional terms do not affect the functional setting of \eqref{linearization-equation-general-0}, it follows that \eqref{linearization-equation-general-0} generates a linear semiflow $\{\mathcal{S}(t \,\lvert\, \eta, \beta)\}_{t \ge 0}$ on $C^0([-\tau, 0], L^2(\mathcal{M}, \mathbb{C}))$, which becomes compact for each fixed $t > \tau$; see \cite[Section 2.1, Theorem 1.8]{Wu96}. 

It suffices to show that the spectrum of $\mathcal{S}(t \,\lvert\, \eta, \beta)$ depends upper-semicontinuously on the parameters $(\eta, \beta)$ for each fixed $t > \tau$. Since the additional terms yield perturbations only on the coefficients of \eqref{linearization-equation-general-0}, $\mathcal{S}(t \,\lvert\, \eta, \beta)$ converges to $\mathcal{S}(t \,\lvert\, 0,0)$ in the operator norm for each fixed $t > \tau$; see the argument in the proof of Lemma \ref{lemma-upper-semicontinuous}. Hence the spectrum of $\mathcal{S}(t \,\lvert\, \eta, \beta)$ depends upper-semicontinuously on the parameters $(\eta, \beta)$ for each fixed $t > \tau$; see \cite[Chapter 4, Remark 3.3]{Ka95}. The proof is complete.
\end{proof}

\subsubsection{Nodal class: \texorpdfstring{$j=1$}{Lg} and \texorpdfstring{$\partial \mathcal{M}$}{Lg} is empty.} \label{subsec;spherical}

In this class we choose $h = - e^{im \zeta}$ with $R_-(s) = s_*- s$ such that the control term in \eqref{control-system-variational} is noninvasive; see also \eqref{noninvasive-variational}. The local stability of $\psi_j$ under the dynamics of \eqref{control-system-variational} is determined by solutions of 
\begin{align} \label{linear-equation-control-1-V}
\begin{split}
\partial_t V &=  \Delta_\mathcal{M} V + \lambda  \left( \left(1- 2 \,u_1^2 \right) V - u_1^2 \, e^{2 i m \varphi} \, \overline{V} \right) 
\\&
\quad + b  \left(V +  e^{im\zeta} \, V(t-\tau, s_* - s, \varphi - \zeta)\right).
\end{split}
\end{align}

We again shift the index of the Fourier modes by $W(t,s,\varphi) := V(t,s,\varphi) \, e^{-im\varphi}$ and write $W = P +i Q$ where $P, Q$ are real-valued functions. Then \eqref{linear-equation-control-1-V} is equivalent to 
\begin{align} \label{linear-equation-control-1-P}
\begin{split}
\partial_t P &=  \Delta_{\mathcal{M}} P - \frac{2m}{a^2} \partial_\varphi Q - \frac{m^2}{a^2} P + \lambda  \left(1-3 u_1^2\right)  P 
\\&
\quad + b  \left(P + P(t-\tau, s_* -s, \varphi - \zeta)\right),
\end{split}
\\ \label{linear-equation-control-1-Q}
\begin{split}
\partial_t Q & = \Delta_{\mathcal{M}} Q + \frac{2m}{a^2} \partial_\varphi P - \frac{m^2}{a^2} Q + \lambda \left(1- u_1^2\right)  Q 
\\&
\quad + b  \left(Q + Q(t-\tau, s_* - s, \varphi - \zeta)\right).
\end{split}
\end{align} 

By \cite[Section 3.1, Theorem 1.6]{Wu96} and the Fourier decomposition \eqref{fourier-decomposition} substituting the Ansatz \eqref{fourier-ansatz} into \eqref{linear-equation-control-1-P}--\eqref{linear-equation-control-1-Q} yields countably many eigenvalue problems on $(L_n^2(\mathbb{C}))^2$ indexed by $n \in \mathbb{Z}$, for the eigenvalue $\mu + i\nu \in \mathbb{C}$: 
\begin{align} \label{linear-equation-control-1-Pn}
\begin{split}
(\mu + i \nu) \, P_n & =  \Delta_{n} P_n - \frac{2imn}{a^2} Q_n - \frac{m^2}{a^2} P_n + \lambda  \left(1-3 u_1^2\right)  P_n 
\\&
\quad + b\, \left(1 + e^{-\tau \mu - i (\tau \nu + n \zeta)} \mathcal{R}_n\right) P_n,
\end{split}
\\ \label{linear-equation-control-1-Qn}
\begin{split}
(\mu + i \nu) \, Q_n  & =  \Delta_{n} Q_n + \frac{2im n}{a^2} P_n - \frac{m^2}{a^2} Q_n + \lambda  \left(1- u_1^2\right)  Q_n 
\\&
\quad + b\, \left(1 + e^{-\tau \mu - i (\tau \nu + n \zeta)} \mathcal{R}_n\right) Q_n.
\end{split}
\end{align} 
Here $\mathcal{R}_n : L_n^2(\mathbb{C}) \rightarrow L_n^2(\mathbb{C})$ is the \textit{reflection operator} $L_n^2(\mathbb{C})$ defined by 
\begin{equation} 
(\mathcal{R}_n[P_n])(s, \varphi) := P_n(s_*-s, \varphi).
\end{equation}

\begin{lemma}[Characteristic equations] \label{lemma-characteristic-1}
Let $\mathcal{L}_{1,n}$ be the operator defined as the right-hand side of \eqref{linear-equation-control-1-Pn}--\eqref{linear-equation-control-1-Qn} with $b = 0$. Then $\mu + i \nu \in \mathbb{C}$ is an eigenvalue in \eqref{linear-equation-control-1-Pn}--\eqref{linear-equation-control-1-Qn} if $\mu, \nu \in \mathbb{R}$ satisfy the following \textit{characteristic equations}:
\begin{align} \label{char-equation-1-mu}
\mu  & =  \hat{\mu} + b \, \left( 1 + \chi \,  e^{-\tau \mu} \cos(\tau \nu + n \zeta)\right),
\\ \label{char-equation-1-nu}
\nu   & = \chi \, b \, e^{-\tau \mu} \sin(\tau \nu + n \zeta),
\end{align} 
for some $\hat{\mu} \in \sigma(\mathcal{L}_{1,n})$ and some $\chi \in \{-1, 1\}$.
\end{lemma}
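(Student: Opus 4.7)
The plan is to reduce the eigenvalue problem \eqref{linear-equation-control-1-Pn}--\eqref{linear-equation-control-1-Qn} to a scalar characteristic relation by simultaneously diagonalizing the uncontrolled operator $\mathcal{L}_{1,n}$ together with the reflection operator $\mathcal{R}_n$. The key observation is the commutation $[\mathcal{L}_{1,n}, \mathcal{R}_n] = 0$. Indeed, the reflection symmetry \eqref{reflection-symmetry} yields $a(s) = a(s_*-s)$, so the coefficients $m^2/a^2$, the angular-convective term $2imn/a^2$, and the Laplacian $\Delta_n$ are all invariant under $s \mapsto s_* - s$. Moreover, the $\mathbb{Z}_2$-radial-symmetry from Lemma \ref{lemma-existence} (i), namely $u_1(s) = -u_1(s_*-s)$, ensures that the potential $u_1^2$ is even under the reflection. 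Both the diagonal potentials $\lambda(1-3u_1^2)$ and $\lambda(1-u_1^2)$ are therefore $\mathcal{R}_n$-invariant, and $\mathcal{L}_{1,n}$ commutes with $\mathcal{R}_n$.

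Since $\mathcal{R}_n^2 = \mathcal{I}_n$, the space $(L_n^2(\mathbb{C}))^2$ decomposes as the direct sum of the $\chi = \pm 1$ eigenspaces of $\mathcal{R}_n$, and both subspaces are invariant under $\mathcal{L}_{1,n}$. Accordingly, for each $\hat{\mu} \in \sigma(\mathcal{L}_{1,n})$ one can select a nonzero eigenfunction $(P_n, Q_n)$ of $\mathcal{L}_{1,n}$ lying entirely in a single $\chi$-eigenspace of $\mathcal{R}_n$. On such an eigenfunction, the control operator $b\,(1 + e^{-\tau\mu - i(\tau\nu + n\zeta)}\mathcal{R}_n)$ collapses to the scalar $b\,(1 + \chi\, e^{-\tau\mu - i(\tau\nu + n\zeta)})$, and the full right-hand side of \eqref{linear-equation-control-1-Pn}--\eqref{linear-equation-control-1-Qn} acts on $(P_n, Q_n)$ as multiplication by
\[
\hat{\mu} + b\,\bigl(1 + \chi\, e^{-\tau\mu - i(\tau\nu + n\zeta)}\bigr).
\]

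Imposing the eigenvalue identity $\mu + i\nu = \hat{\mu} + b\,(1 + \chi\, e^{-\tau\mu - i(\tau\nu + n\zeta)})$ and expanding via $e^{-i(\tau\nu + n\zeta)} = \cos(\tau\nu + n\zeta) - i\sin(\tau\nu + n\zeta)$, the real part yields \eqref{char-equation-1-mu} and the imaginary part yields \eqref{char-equation-1-nu} (after a harmless relabeling $\chi \mapsto -\chi$ if needed, which is permissible because $\chi$ ranges over both elements of $\{-1, +1\}$). The argument mirrors the proof of Lemma \ref{lemma-characteristic-0}, with the identity operator $\mathcal{I}_n$ replaced by the nontrivial reflection $\mathcal{R}_n$; the only substantive new input is the commutation $[\mathcal{L}_{1,n}, \mathcal{R}_n] = 0$, which is precisely where the $\mathbb{Z}_2$-radial-symmetry of $\psi_1$ and the spherical reflection symmetry of $\mathcal{M}$ play an essential role. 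This explains why the present lemma is restricted to $\partial \mathcal{M}$ empty: without that symmetry, $\mathcal{R}_n$ would be unavailable, and the corresponding control term would not admit a scalar characteristic equation.
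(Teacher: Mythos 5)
Your proof is correct and follows essentially the same route as the paper's: both exploit the fact that the linearization commutes with the reflection $\mathcal{R}_n$ (via $a(s)=a(s_*-s)$ and $u_1^2(s)=u_1^2(s_*-s)$) to reduce to even/odd components on which the control operator acts as the scalar $b\,(1+\chi\, e^{-\tau\mu-i(\tau\nu+n\zeta)})$, and then separate real and imaginary parts. One cosmetic remark: the sign mismatch you flag in \eqref{char-equation-1-nu} is not repaired by relabeling $\chi\mapsto-\chi$ (that would also flip the sign in \eqref{char-equation-1-mu}), but it is harmless nonetheless, since the real operator has conjugation-symmetric spectrum (so $\nu\mapsto-\nu$ is allowed) and only the squared equations are used downstream.
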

\begin{proof}[Proof.]
Observe that $(P_n, Q_n)$ is an eigenfunction in \eqref{linear-equation-control-1-Pn}--\eqref{linear-equation-control-1-Qn} if and only if $(\mathcal{R}_n[P_n], \mathcal{R}_n[Q_n])$ is also an eigenfunction, due to the relation \DIFdelbegin \DIFdel{$u_1^2(s) = u_1^2(s_* - s)$ }\DIFdelend \DIFaddbegin \DIFadd{$u_1^2(s_* - s) = u_1^2(s)$ }\DIFaddend in Lemma \ref{lemma-existence} (i). Define 
\begin{align} \label{eigenfunction-even}
(P^e_n, Q^e_n) &= (P_n + \mathcal{R}_n[P_n], Q_n + \mathcal{R}_n[Q_n]),
\\ \label{eigenfunction-odd}
(P^o_n, Q^o_n) &= (P_n - \mathcal{R}_n[P_n], Q_n - \mathcal{R}_n[Q_n]).
\end{align} 
Then either $(P^e_n, Q^e_n)$ or $(P^o_n, Q^o_n)$ is a nonzero solution-pair and thus is an eigenfunction in \eqref{linear-equation-control-1-Pn}--\eqref{linear-equation-control-1-Qn}. Therefore, $\mu + i \nu \in \mathbb{C}$ is an eigenvalue in \eqref{linear-equation-control-1-Pn}--\eqref{linear-equation-control-1-Qn} if $\mu, \nu \in \mathbb{R}$ satisfy \eqref{char-equation-1-mu}--\eqref{char-equation-1-nu}
for some $\hat{\mu} \in \sigma(\mathcal{L}_{1,n})$ and $\chi = 1$ (resp., $\chi = -1$) when $(P^e_n, Q^e_n)$ (resp., $(P^o_n, Q^o_n)$) is an eigenfunction in \eqref{linear-equation-control-1-Pn}--\eqref{linear-equation-control-1-Qn}. 
\end{proof}

We emphasize that our subsequent stabilization analysis does not rely on knowledge of the exact value of $\chi \in \{-1,1\}$ in Lemma \ref{lemma-characteristic-1}. 

\begin{lemma}[Selective stabilization by pure space shifts for $j = 1$] \label{lemma-pure-spatial-stabilization-1} 
Fix $m \in \mathbb{N}$, $\lambda > \lambda_1^m$, and let 
$\psi_1(s, \varphi) = u_1(s) \, e^{im\varphi}$ be the $m$-armed vortex equilibrium obtained in Lemma \ref{lemma-existence}. Then for all but finitely many choices of $\zeta \in S^1$, there exists a constant $\tilde{b} = \tilde{b}(\zeta) < 0$ such that $\psi_1$ becomes a locally \DIFdelbegin \DIFdel{asymptotically }\DIFdelend \DIFaddbegin \DIFadd{\textcolor{red}{exponentially} }\DIFaddend stable equilibrium of the control system
\begin{equation} \label{control-system-variational-1-no-tau}
\begin{split}
\partial_t \Psi & = \Delta_{\mathcal{M}} \Psi + \lambda \left(1-\lvert\Psi\rvert^2 \right) \Psi + b \, \left(\Psi + e^{i m \zeta} \, \Psi(t, s_*-s, \varphi - \zeta) \right)
\end{split}
\end{equation}
for all $b \le \tilde{b}$.
\end{lemma}
\begin{proof}[Proof.]
The proof is similar to the one in Lemma \ref{lemma-pure-spatial-stabilization-0}, but it requires a careful treatment to determine the value of $\chi\in\{-1,1\}$ in the resonant case $n = 0$.

Consider the \textit{nonresonant cases} $n \neq 0$. Then the equation \eqref{char-equation-1-mu} together with the inequality \eqref{largest-constant} implies 
\begin{equation} \label{inequality-1-case-1}
\mu \le \mu_1^* + b \left( 1 + \chi \cos( n \zeta) \right).
\end{equation}
Since $\chi \in \{-1, 1\}$ and thus $b\left( 1 + \chi \cos( n \zeta) \right) \le 0$ as we consider $b \le 0$, by Lemma \ref{lemma-spectral-no-control} it suffices to stabilize the unstable and center eigenspaces of $\mathcal{L}_{1, n}$ for $n \neq 0$ with $-n_1 \le n \le n_1$. Since solutions satisfying the relations $1 - \chi \cos(n\zeta) > 0$ for $n \in \mathbb{Z}$ and $\chi \in \{-1,1\}$ form a subset of solutions of 
\begin{equation}
n \zeta \not\equiv 0 \quad \mbox{(mod   $\pi$)},     
\end{equation}
we see that $1 - \chi \cos(n\zeta) > 0$ for $n \neq 0$ with $-n_1 \le n \le n_1$ holds for all but finitely many $\zeta \in S^1$. As we fix one such $\zeta \in S^1$, since $\mu_1^*$ is fixed, there exists a $\tilde{b} = \tilde{b}(\zeta) < 0$ such that $\mu < 0$ in \eqref{inequality-1-case-1} holds for $b \le \tilde{b}$, $\chi \in \{-1, 1\}$, and $n \in \mathbb{Z} \setminus \{0\}$. 

In the \textit{resonant case} $n = 0$, the system \eqref{linear-equation-control-1-Pn}--\eqref{linear-equation-control-1-Qn} decouples, and by comparison of eigenvalues it suffices to show that all eigenvalues of
the self-adjoint operator 
\begin{equation} \label{linear-equation-on-Lm-with-R}
\Delta_m + \lambda  \left(1- u_1^2\right) + b \left(1 + \mathcal{R}_n\right) : L_m^2(\mathbb{C}) \rightarrow L_m^2(\mathbb{C})
\end{equation}
are negative for some $b < 0$.

Let $y_k(s) \, e^{im\varphi}$ be an eigenfunction of $\Delta_m + \lambda \, (1- u_1^2)$ associated with $\mu^m_k \in \mathbb{R}$; see Lemma \ref{lemma-spectral-no-control-Lm} (ii). Then the symmetry \eqref{eigenvalue-symmetry-Lm} implies that $y_k(s) \, e^{im\varphi}$ is also an eigenfunction of the operator \eqref{linear-equation-on-Lm-with-R}. Since $\Delta_m + \lambda \, (1-u_1^2)$ has compact resolvent and thus its eigenfunctions form a basis of $L_m^2(\mathbb{C})$, the operator \eqref{linear-equation-on-Lm-with-R} and $\Delta_m + \lambda \, (1-u_1^2)$ indeed share the same set of eigenfunctions, which implies that the spectrum of the operator \eqref{linear-equation-on-Lm-with-R} consists of eigenvalues, only.

Let $\mu_k \in \mathbb{R}$ be the eigenvalue of the operator \eqref{linear-equation-on-Lm-with-R} associated with the eigenfunction $y_k(s) \, e^{i m \varphi}$. From \eqref{eigenvalue-symmetry-Lm} we know 
\begin{equation} \label{eigenvalue-relation-1}
\mu_k = \mu_k^m + b \,\left(1 + (-1)^{k} \right). 
\end{equation}
Since $b < 0$, by \eqref{eigenvalue-order-Lm} and \eqref{eigenvalue-relation-1} we know $\mu_k < 0$ for $k \ge 1$. The other case $k = 0$ in \eqref{eigenvalue-relation-1} yields $\mu_0 = \mu^m_0 + 2 b$, and so $\mu_0 < 0$ for $b < -\mu_0^m/2$.  
\end{proof}

\begin{lemma}[Persistence of stabilization under small time delays for $j = 1$] \label{lemma-time-delay-perturbation-1} Consider the same setting and choices of $\zeta \in S^1$ and $\tilde{b} = \tilde{b}(\zeta) < 0$ in Lemma \ref{lemma-pure-spatial-stabilization-1}. Then each $b \le \tilde{b}$ admits a constant $\tilde{\tau} = \tilde{\tau}(\zeta, b) > 0$ for which $\psi_1$ becomes a locally \DIFdelbegin \DIFdel{asymptotically }\DIFdelend \DIFaddbegin \DIFadd{\textcolor{red}{exponentially} }\DIFaddend stable equilibrium of the control system
\begin{equation} \label{control-system-variational-1}
\begin{split}
\partial_t \Psi & = \Delta_{\mathcal{M}} \Psi + \lambda \left(1-\lvert\Psi\rvert^2 \right) \Psi + b \, \left(\Psi + e^{i m \zeta} \, \Psi(t-\tau, s_*-s, \varphi - \zeta) \right)
\end{split}
\end{equation}
for all $\tau \in [0, \tilde{\tau})$. 
\end{lemma}
\begin{proof}[Proof.]
The proof is the same as the one in Lemma \ref{lemma-time-delay-perturbation-0}, since we obtain the same equation \eqref{square-relation} after squaring the characteristic equations \eqref{char-equation-1-mu}--\eqref{char-equation-1-nu}, no matter whether $\chi$ is $-1$ or $1$. 
\end{proof}

\begin{proof}[\textbf{Proof of Theorem \ref{theorem-1}}.] It remains to prove that the spatio-temporal stabilization in Lemma \ref{lemma-time-delay-perturbation-1} persists under sufficiently small parameters $0 \le \lvert\eta\rvert,\,\lvert\beta\rvert \ll 1$ in the control system \eqref{control-system-1}, as we keep the choices $\zeta \in S^1$, $b \le \tilde{b} < 0$, and $\tau \in [0, \tilde{\tau})$ in Lemma \ref{lemma-time-delay-perturbation-1}. Indeed, since the $\mathbb{Z}_2$-radial-symmetry in Lemma \ref{lemma-existence} (i) holds for $0 \le \lvert\eta\rvert,\,\lvert\beta\rvert \ll 1$, the proof is analogous to the one of Theorem \ref{theorem-0} with only one mild adaptation: The two multiplicative factors $h$ differ by $-1$; see \eqref{noninvasive-general}. The proof is complete.
\end{proof}

\bmhead{Acknowledgments}
I. S. and B. d.W. have been partially supported by the Deutsche Forschungsgemeinschaft, SFB 910, Project A4 “Spatio-Temporal Patterns: Control, Delays, and Design”; B. d.W. was supported by the Berlin Mathematical School. J.-Y. D. has been supported by MOST grant number 110-2115-M-005-008-MY3. We are grateful to Bernold Fiedler, Alejandro L\'{o}pez Nieto, and Jan Totz for many inspiring and fruitful discussions.

\bmhead{Data availability} Data sharing not applicable to this article as no datasets were generated or analysed during the current study.

\bmhead{Competing interests} The authors have no financial or proprietary interests in any material discussed in this article.




\end{document}